\documentclass[11pt,centertags,reqno,twoside]{amsart}
\usepackage{amssymb, graphicx}
\usepackage{mathptmx}
\usepackage[mathcal]{euscript}

\usepackage[english]{babel}

\DeclareSymbolFont{SY}{U}{psy}{m}{n}
\DeclareMathSymbol{\emptyset}{\mathord}{SY}{'306}

\allowdisplaybreaks

\numberwithin{equation}{section}

\newtheorem{theorem}{Theorem}

\newtheorem{lemma}[theorem]{Lemma}

\theoremstyle{definition}
\newtheorem{definition}[theorem]{Definition}
\theoremstyle{remark}
\newtheorem{remark}[theorem]{Remark}

\begin{document}

\title[Spectral asymptotics for two--term even--order differential operators with
homogeneous delay]
{Spectral asymptotics for two--term even--order differential operators with
homogeneous delay}

\author[D. M. Polyakov]
{Dmitry M. Polyakov}

\address{Dmitry M. Polyakov\newline\hspace*{9mm} Southern Mathematical Institute, Vladikavkaz Scientific Center of
RAS\newline\hspace*{9mm}
Vladikavkaz, 53 Vatutin str., Russia\newline\hspace*{9mm}
Bashkir State Pedagogical University named after M. Akhmulla\newline\hspace*{9mm}
Ufa, 3a Oktyabrskoj rev. str., Russia}
\email{DmitryPolyakow@mail.ru}

\keywords{two--term even--order differential operator, homogeneous delay,
eigenvalue asymptotics, non--local operator}.

\begin{abstract}
We consider two--term even--order operators on the unit interval with
homogeneous delay and with Dirichlet and Neumann boundary conditions. The main result
provides to the eigenvalue asymptotics of this operator with respect to all indices
enumerating the eigenvalues. This asymptotic formula is uniform in the parameter
of the homogeneous delay. We also discuss the nontrivial
high--frequency phenomenon demonstrated by the uniform spectral asymptotics.
\end{abstract}

\maketitle

\section{Introduction}\label{sec1}

Functional--differential equations are important and interesting objects of study.
Problems containing contractions (or homogeneous delay) and dilatations of
independent variables arise in the theory of nonlinear oscillations, astrophysics,
modeling of cell growth processes, number theory, and probability theory,
see~\cite{Ambartsumyan}, \cite{Ockendon}, \cite{Hall}, \cite{Gaver}.
The general theory of boundary value
problems for elliptic functional--differential equations with variable contractions
was constructed in \cite{Ros2011}, \cite{Ros2017}, \cite{RosTovs2019}, \cite{RosTovs2022}.
These papers also concern the results about the existence, uniqueness, and
smoothness of solutions to this class of equations.

While the theory of functional--differential equations with contractions is quite well--developed, the spectral
properties of corresponding operators is almost unexplored. In particular, in~\cite{PodSkub} and~\cite{Skub} for
strongly elliptic equations with contractions, the completeness and basis property for the system of
eigenfunctions and associated functions were established. We also mention the manuscript \cite{Ros2017},
where the author considered the elliptic operators with contractions in domains with small boundary perturbations.
In that article it was established that the eigenvalues of these operators converge to the eigenvalues of the operators
in the limiting domains, estimates of convergence rates were determined, and the stability of
the eigenvalues of the Neumann problem for the corresponding equations was proved.

Some aspects of inverse spectral problems for Sturm--Liouville type
operators with homogeneous delay were considered in \cite{Nedic},
\cite{Pikula2014}, \cite{Pikula2013}, \cite{Djuric}. In particular,
the leading term in the eigenvalue asymptotics for this operator was obtained
in \cite{Djuric}, and some next--to--leading terms were established in \cite{Nedic},
\cite{Pikula2014}, \cite{Pikula2013} for fixed values of the homogeneous delay parameter.
However, the estimates of the error terms are not uniform in this parameter.

In our article we study operators with homogeneous delay in the lower order
term. Namely, we consider the even--order operator on an interval with
Dirichlet type and Neumann type boundary conditions perturbed by an operator
with homogeneous delay. The contraction can be arbitrary, including the case
of an arbitrary small contraction. Note that this operator is more general than
the operator in \cite{Nedic}, \cite{Pikula2014}, \cite{Pikula2013}, \cite{Djuric}.

Our main result is the eigenvalue asymptotics, which is uniform in
the homogeneous delay parameter. The uniformity of the error term is
important for the following reason. The presence of a parameter in the perturbation
entails a natural dependence of the error term of the spectral asymptotics
on the index and on this parameter.
However, the estimate for this term can be destroyed if the index and parameter
grow simultaneously. As a simplest example, consider the function $(n\alpha)^{-2}$.
For each fixed $\alpha$ and large $n$, this function tends to zero as
$\mathcal{O}(n^{-2})$. But if $\alpha$ is also reduced (for example, $\alpha=n^{-1}$)
this function has order $\mathcal{O}(1)$, while for $\alpha=n^{-2}$
it grows. Therefore, the most objective approach is to determine uniform
spectral asymptotics in the parameter $\alpha$. Note that none of the papers
\cite{Nedic}, \cite{Pikula2014}, \cite{Pikula2013}, \cite{Djuric} on the
Sturm--Liouville operator with homogeneous delay address this issue.

We emphasize that classical methods of perturbation theory do not in their standard form provide uniform spectral asymptotics.
In order to address this problem we develop a method based on the method of similar operators (see~\cite{BaskKrUsk}).
That method provides general conditions for the similarity of the considered operator to an operator with a simpler
structure (see~Section~\ref{sec4}). However, even for classical differential operators verifying these conditions is nontrivial
and requires new approaches, which are typically cumbersome and involve additional constructions. In this article we propose a new
simple approach that differs significantly from previous ones (cf.~\cite{BorPolIzv}, \cite{PolyakovM2AS}). It allows
direct use of the abstract results of \cite{BaskKrUsk}. Moreover, almost all
methods of perturbation theory (and, in particular, the method of similar operators)
are designed to handle spectral asymptotics only for large indices. Our approach, however,
describes the ensemble of eigenvalues for \emph{all indices}. Thus, since in the
particular case our nonlocal operator is an ordinary differential operator, we
present a fairly effective method for obtaining asymptotic formulas for all eigenvalues.

\section{Statement of problem and the main results}

We consider a self--adjoint operator $\mathcal{A}: \mathfrak{D}(\mathcal{A})\subset L^2(0, 1)
\to L^2(0, 1)$ of the form $\mathcal{A}y=(-1)^ky^{(2k)}$ on the domain
$\mathfrak{D}(\mathcal{A})=\{y\in W_2^{2k}(0, 1)\}$ with the following boundary conditions
\begin{align*}
&\text{Dirichlet type:}\quad y^{(2j)}(0)=y^{(2j)}(1)=0, \quad 0\leqslant
j\leqslant k-1, \\
&\text{Neumann type:}\quad y^{(2j+1)}(0)=y^{(2j+1)}(1)=0,
\quad 0\leqslant j\leqslant k-1.
\end{align*}
Now we define the contraction operator in $L^2(0, 1)$ as
\[
(\mathcal{L}(\alpha)y)(x)=y(\alpha x), \qquad x\in (0, 1),
\]
where $\alpha\in (0, 1)$ is a parameter.

Let $V$ be a complex--valued function in the space $L^\infty(0, 1)$ with a sufficiently small norm.
We introduce an operator $\mathcal{B}(\alpha)$ acting into $L^2(0, 1)$ by the rule
\begin{equation}\label{B}
\mathcal{B}(\alpha)=-V\mathcal{L}(\alpha),
\end{equation}
where $\alpha\in (0, 1)$. Note that if $\alpha=1$, then $\mathcal{B}(1)$ becomes the
classical operator of multiplication by $V$. This case is studied in
detail in \cite{PolyakovM2AS}. If $\alpha=0$, then operator $\mathcal{H}(0)$
is the operator with frozen argument. This degenerate case is of a little interest
here, as $\mathcal{B}(0)$ is a finite--rank perturbation.
The operators with frozen argument are a separate class
with interesting properties (see articles \cite{BondButVas}, \cite{BondHuYang}
for the second--order operators). This requires independent consideration
for higher--order operators and we do not consider such problem in this paper.

In this article we study the behavior of spectrum of the nonlocal
operator $\mathcal{H}(\alpha)=\mathcal{A}-\mathcal{B}(\alpha)$ of the form
\[
(\mathcal{H}(\alpha)y)(x)=(-1)^ky^{(2k)}(x)+V(x)y(\alpha x)
\]
for $y\in\mathfrak{D}(\mathcal{H}(\alpha))$ on the domain
$\mathfrak{D}(\mathcal{H}(\alpha)):=\mathfrak{D}(\mathcal{A})$
with the Dirichlet or Neumann types of boundary conditions.
Our preliminary result describes the basic properties of the operator
$\mathcal{H}(\alpha)$.
\begin{theorem}\label{th0}
Suppose that $V\in L^\infty(0, 1)$ and
\begin{equation}\label{Vc1lalpha}
\|V\|_{L^\infty(0, 1)}\leqslant c_0\sqrt{\alpha},
\end{equation}
where $\alpha\in (0, 1)$ and $c_0>0$ is some constant independent of $\alpha$.
The operator $\mathcal{H}(\alpha)$ is $m$--sectorial
and the associated sectorial closed form in the space $L^2(0, 1)$ is defined by the identity
\begin{equation}\label{2.1}
\mathfrak{h}^\alpha(u, v)=(u^{(k)}, v^{(k)})_{L^2(0, 1)}
+(V\mathcal{L}(\alpha)u, v)_{L^2(0, 1)}
\end{equation}
on the domain $\mathfrak{D}(\mathfrak{h}^\alpha)=\{u\in W_2^k(0, 1)\}$.
There exists a number $\Lambda$ independent of $\alpha$ such that the half--plane
$\mathrm{Re}\,\lambda\leqslant \Lambda$ is in the resolvent set of the operator
$\mathcal{H}(\alpha)$ for each $\alpha\in (0, 1)$. Moreover, the operator
$\mathcal{H}(\alpha)$ has a compact resolvent and its spectrum consists of
countably many eigenvalues with the only accumulation point at infinity.
\end{theorem}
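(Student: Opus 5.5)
Our plan is to treat $\mathcal{H}(\alpha)$ as a bounded perturbation of the self--adjoint nonnegative operator $\mathcal{A}$ and to work at the level of forms. The key preliminary estimate is the norm of the contraction operator: for $u\in L^2(0,1)$,
\[
\|\mathcal{L}(\alpha)u\|^2_{L^2(0,1)}=\int_0^1|u(\alpha x)|^2\,dx=\frac1\alpha\int_0^\alpha|u(t)|^2\,dt\leqslant \frac1\alpha\|u\|^2_{L^2(0,1)},
\]
so $\|\mathcal{L}(\alpha)\|\leqslant \alpha^{-1/2}$. Combined with \eqref{Vc1lalpha} this gives $\|\mathcal{B}(\alpha)\|\leqslant \|V\|_{L^\infty}\alpha^{-1/2}\leqslant c_0$, a bound \emph{uniform} in $\alpha\in(0,1)$. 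This is exactly why the hypothesis carries the factor $\sqrt{\alpha}$, and it is the step on which uniformity of $\Lambda$ depends.

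Next we identify the form. For $u\in\mathfrak{D}(\mathcal{A})$ and $v\in W_2^k(0,1)$, integrating by parts $k$ times gives $((-1)^ku^{(2k)},v)=(u^{(k)},v^{(k)})$. The boundary terms are of the form $u^{(2k-1-j)}\overline{v^{(j)}}$ at $0$ and $1$, with $j=0,\dots,k-1$. Under either boundary condition one factor in each term vanishes. For the Dirichlet type conditions, the indices $2k-1-j$ and $j$ have opposite parity, so one of them is even. If that index is $j$ one must use $v$'s conditions, so one should first check the closure of the form domain. The cleanest route is to note that $\mathcal{A}$ is the self--adjoint operator of the form $\mathfrak{a}(u,v)=(u^{(k)},v^{(k)})$ on the form domain $\mathfrak{D}(\mathcal{A}^{1/2})$. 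We use the eigenbasis $\sin\pi nx$ (Dirichlet) or $\cos\pi nx$ (Neumann) with eigenvalues $(\pi n)^{2k}$. Then $\mathfrak{D}(\mathfrak{a})$ is the closure of $\mathfrak{D}(\mathcal{A})$ in the $W_2^k$ norm, which we take as the meaning of $\mathfrak{D}(\mathfrak{h}^\alpha)$ (i.e. $W_2^k$ with the essential boundary conditions). The form $\mathfrak{a}$ is closed, symmetric and nonnegative.

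We add the bounded form $\mathfrak{b}(u,v)=(V\mathcal{L}(\alpha)u,v)$. Since $|\mathfrak{b}(u,u)|\leqslant c_0\|u\|^2$, it is relatively bounded with respect to $\mathfrak{a}$ with bound $0$. By Kato's perturbation theorem for sectorial forms, $\mathfrak{h}^\alpha=\mathfrak{a}+\mathfrak{b}$ is closed and sectorial on the same domain, with numerical range in $\{\mathrm{Re}\,\lambda\geqslant -c_0,\ |\mathrm{Im}\,\lambda|\leqslant c_0\}$. The associated $m$--sectorial operator is $\mathcal{A}+V\mathcal{L}(\alpha)=\mathcal{H}(\alpha)$ with domain $\mathfrak{D}(\mathcal{A})$, because a bounded perturbation does not change the operator domain. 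This gives the first two claims.

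For the resolvent set we take $\Lambda=-c_0-1$. If $\mathrm{Re}\,\lambda\leqslant\Lambda$, then $\mathcal{A}-\lambda$ is invertible with $\|(\mathcal{A}-\lambda)^{-1}\|\leqslant 1/(c_0+1)$, since $\mathcal{A}\geqslant0$. Hence $\|\mathcal{B}(\alpha)(\mathcal{A}-\lambda)^{-1}\|\leqslant c_0/(c_0+1)<1$, and the Neumann series
\[
(\mathcal{H}(\alpha)-\lambda)^{-1}=(\mathcal{A}-\lambda)^{-1}\bigl(I-\mathcal{B}(\alpha)(\mathcal{A}-\lambda)^{-1}\bigr)^{-1}
\]
converges. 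The choice of $\Lambda$ is independent of $\alpha$ by the uniform bound above. Since $(\mathcal{A}-\lambda)^{-1}$ is compact, its eigenvalues $((\pi n)^{2k}-\lambda)^{-1}$ tend to $0$. The product is therefore compact, and by the Hilbert identity the resolvent is compact at every regular point. The Riesz--Schauder theory then yields a discrete spectrum of eigenvalues of finite multiplicity accumulating only at infinity. Infinitely many eigenvalues exist because $\mathcal{H}(\alpha)$ is unbounded with compact resolvent on an infinite-dimensional space: the resolvent is a compact operator with trivial kernel, and a quasinilpotent resolvent would be excluded e.g. by the later asymptotics or by a Keldysh-type completeness argument. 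This last point, together with the careful treatment of boundary terms for the form domain, is the main (mild) obstacle.
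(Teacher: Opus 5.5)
Your proposal is correct, but it reaches the conclusions by a different mechanism than the paper. The paper runs the standard variational/elliptic argument. It derives a coercivity bound $\mathrm{Re}\,\mathfrak{h}^\alpha(u,u)+c_1\|u\|^2\geqslant\frac12\|u\|^2_{W_2^k}$ from interpolation inequalities and applies the first representation theorem. It identifies the associated operator with $\mathcal{H}(\alpha)$ via integration by parts and elliptic regularity of the generalized solution. It reads the half-plane off the numerical range, and gets compactness of the resolvent from its boundedness into $W_2^k$ together with the compact embedding $W_2^k\hookrightarrow L^2$.

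You instead push the uniform operator bound $\|\mathcal{B}(\alpha)\|\leqslant c_0$ through every step. (The paper has this bound too, but uses it only inside the form estimate.) You add a bounded form to the closed form of $\mathcal{A}$, and observe that a bounded perturbation leaves the operator domain $\mathfrak{D}(\mathcal{A})$ unchanged, so no regularity theorem is needed. A Neumann series gives the explicit $\Lambda=-c_0-1$, and compactness is inherited from $(\mathcal{A}-\lambda)^{-1}$. For this bounded perturbation your route is shorter and entirely elementary. The paper's framework is the one that extends to perturbations that are only form-bounded.

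Your route also repairs a point the paper glosses over. The form domain has to be $\mathfrak{D}(\mathcal{A}^{1/2})$, i.e.\ $W_2^k$ with the essential conditions at the endpoints: $u^{(2j)}=0$ for $2j<k$ (Dirichlet type), resp.\ $u^{(2j+1)}=0$ for $2j+1<k$ (Neumann type). For arbitrary $v\in W_2^k(0,1)$ the boundary terms in $(\mathcal{H}(\alpha)u,v)=\mathfrak{h}^\alpha(u,v)$ do not vanish. Moreover, the form on all of $W_2^k(0,1)$ generally produces the natural conditions $u^{(k)}=\dots=u^{(2k-1)}=0$ instead.

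Finally, you are right that Kato's compact-resolvent theorem only yields an at most countable spectrum, and the paper does not address infinitude. Rather than appealing to Keldysh or to the later asymptotics, you can settle it directly. Let $\gamma_n$ be the distance from $(\pi n)^{2k}$ to the rest of $\sigma(\mathcal{A})$, and consider the circles $|\lambda-(\pi n)^{2k}|=\gamma_n/2$. On them, $\|t\mathcal{B}(\alpha)(\mathcal{A}-\lambda)^{-1}\|\leqslant 2c_0/\gamma_n<1$ for all large $n$ and all $t\in[0,1]$. Hence the Riesz projections of $\mathcal{A}-t\mathcal{B}(\alpha)$ over these circles vary norm-continuously in $t$ and keep rank one. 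This gives exactly one eigenvalue of $\mathcal{H}(\alpha)$ in each such disk.
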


Theorem~\ref{th0} shows that the eigenvalues of the operator $\mathcal{H}(\alpha)$
are located in the half--plane $\mathrm{Re}\,\lambda\geqslant \Lambda$.
We arrange them in the ascending order of their absolute values and denote
by $\lambda_n$, $n\in\mathbb{N}$.

Now we formulate the main result of our manuscript. It is devoted to the eigenvalue
asymptotics for the operator $\mathcal{H}(\alpha)$.
\begin{theorem}\label{th1}
Assume that $V\in L^\infty(0, 1)$ and satisfies the estimate \eqref{Vc1lalpha}
with $\alpha\in (0, 1)$ and $c_0<1/8$. Then
the eigenvalues $\lambda_n$ of the operator $\mathcal{H}(\alpha)$ for the Dirichlet type boundary conditions
have the following asymptotics
\begin{equation}\label{asymptDir}
\lambda_n=(\pi n)^{2k}+2\int_0^1V(x)\sin\pi n\alpha x\sin \pi nx\,dx
-\frac{1}{\pi^{2k}}\sum\limits_{\substack{j=1 \\ j\ne n}}^\infty
\frac{b_{nj}^Db_{jn}^D}{j^{2k}-n^{2k}}+\mathcal{O}(n^{-4k+2}),
\end{equation}
where
\begin{equation}\label{bnjD}
b_{pj}^D=-2\int_0^1V(x)\sin\pi j\alpha x\sin \pi px\,dx, \qquad p, j\geqslant 1.
\end{equation}
In the case of the Neumann type boundary conditions the asymptotics reads
\begin{equation}\label{asymptNeu}
\lambda_n=(\pi n)^{2k}+2\int_0^1V(x)\cos\pi n\alpha x\cos\pi nx\,dx
-\frac{1}{\pi^{2k}}\sum\limits_{\substack{j=0 \\ j\ne n}}^\infty
\frac{b_{nj}^Nb_{jn}^N}{j^{2k}-n^{2k}}+\mathcal{O}(n^{-4k+2}),
\end{equation}
where
\begin{equation}\label{bnjN}
b_{pj}^N=-2\int_0^1V(x)\cos\pi j\alpha x\cos\pi px\,dx, \qquad p, j\geqslant 0.
\end{equation}
The estimate for the error terms is uniform in $\alpha\in (0, 1)$ and $n$.
\end{theorem}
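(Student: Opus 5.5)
We use the method of similar operators, working in the orthonormal eigenbasis of $\mathcal{A}$: $e_n(x)=\sqrt2\sin\pi nx$, $n\geqslant1$, for the Dirichlet case and $e_0=1$, $e_n=\sqrt2\cos\pi nx$ for the Neumann case. In this basis $\mathcal{A}$ is diagonal with eigenvalues $(\pi n)^{2k}$. The perturbation $\mathcal{B}(\alpha)$ has matrix entries
\[
(\mathcal{B}(\alpha)e_j,e_p)=-2\int_0^1V(x)\sin\pi j\alpha x\sin\pi px\,dx=b_{pj}^D
\]
in the Dirichlet case, and similarly $b_{pj}^N$ in the Neumann case (up to the normalization at index $0$). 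The key observation is that $|b_{pj}|\leqslant 2\|V\|_\infty$ uniformly in $p$, $j$ and $\alpha$.

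We also need Hilbert--Schmidt bounds on the columns and rows of this matrix. For fixed $j$, Bessel's inequality gives
\[
\sum_p|b_{pj}|^2\leqslant 2\|V\mathcal{L}(\alpha)e_j\|^2\leqslant 4\|V\|_\infty^2 .
\]
For fixed $p$, write $b_{pj}=(e_j,\mathcal{L}(\alpha)^*(\overline V e_p))$. The adjoint $\mathcal{L}(\alpha)^*$ has norm $\alpha^{-1/2}$, since $(\mathcal{L}(\alpha)^*f)(t)=\alpha^{-1}f(t/\alpha)\chi_{[0,\alpha]}(t)$. Hence
\[
\sum_j|b_{pj}|^2\leqslant 2\alpha^{-1}\|V\|_\infty^2\leqslant 2c_0^2 .
\]
This is exactly where hypothesis \eqref{Vc1lalpha} makes all bounds uniform in $\alpha$, and it is the reason $c_0<1/8$ suffices for the smallness condition.

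We then construct the transformer $\Gamma$ defined by
\[
(\Gamma X)_{pj}=\frac{X_{pj}}{(\pi p)^{2k}-(\pi j)^{2k}},\quad p\neq j,
\]
with zero diagonal, and let $J$ be the diagonal projection. By the abstract theorem of \cite{BaskKrUsk}, the nonlinear equation
\[
X=\mathcal{B}\Gamma X-(\Gamma X)(J\mathcal{B})-(\Gamma X)J(\mathcal{B}\Gamma X)+\mathcal{B}
\]
has a solution in a suitable Banach space of operators provided $\|\mathcal{B}\|\,\|\Gamma\|$ is small enough; $4\|\mathcal{B}\|\|\Gamma\|<1$ is the standard condition. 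Then $\mathcal{A}-\mathcal{B}$ is similar to $\mathcal{A}-JX$, and the eigenvalues are $\lambda_n=(\pi n)^{2k}-(JX)_{nn}$.

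For the ambient space we take operators whose matrices have finite row and column $\ell^2$ norms, or the Hilbert--Schmidt class weighted by the gaps. Since $|(\pi p)^{2k}-(\pi j)^{2k}|\geqslant \pi^{2k}|p^{2k}-j^{2k}|\geqslant \pi^{2k}$, we get $\|\Gamma\|\leqslant \pi^{-2k}$. Together with the bounds above this gives the contraction uniformly for $\alpha\in(0,1)$ and $c_0<1/8$. Working with all indices at once, rather than splitting off a finite block, is what yields asymptotics for every $n$; the constant $1/8$ is meant to absorb the norm of the three nonlinear terms.

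Next we expand $X$ by iteration. We have $X=\mathcal{B}+\mathcal{B}\Gamma\mathcal{B}-(\Gamma\mathcal{B})J\mathcal{B}+R$, where $R$ is of third order. The diagonal of $\Gamma\mathcal{B}J\mathcal{B}$ vanishes because $\Gamma$ kills the diagonal. Hence
\[
(JX)_{nn}=b_{nn}+\sum_{j\neq n}\frac{b_{nj}b_{jn}}{(\pi j)^{2k}-(\pi n)^{2k}}+R_{nn}.
\]
With $b_{nn}=-2\int V\sin\pi n\alpha x\sin\pi nx\,dx$, the relation $\lambda_n=(\pi n)^{2k}-(JX)_{nn}$ gives exactly formulas \eqref{asymptDir} and \eqref{asymptNeu}, with error $-R_{nn}$.

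The main obstacle is proving $|R_{nn}|=\mathcal{O}(n^{-4k+2})$ uniformly in $\alpha$. Each third-order term is a sum of the form
\[
\sum_{j,l}\frac{b_{nj}b_{jl}b_{ln}}{(j^{2k}-n^{2k})(l^{2k}-n^{2k})}
\]
or similar, and entries are only bounded, with no decay in the indices, since for small $\alpha$ the matrix does not decay. We would therefore use the weights: $|j^{2k}-n^{2k}|\geqslant c\,n^{2k-1}|j-n|$ for $j$ near $n$, and $\geqslant c\max(j,n)^{2k}$ otherwise. Applying Cauchy--Schwarz with the uniform row and column $\ell^2$ bounds then gives a bound of
\[
\Bigl(\sum_{j\neq n}|j^{2k}-n^{2k}|^{-2}\Bigr)^{1/2}\Bigl(\sum_{l\neq n}|l^{2k}-n^{2k}|^{-2}\Bigr)^{1/2}\lesssim n^{-(2k-1)}\cdot n^{-(2k-1)}=n^{-4k+2}.
\]
Higher-order remainders form a geometric series controlled by the same contraction, so they share this bound. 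Checking these estimates carefully, with constants independent of $\alpha$, is the technical core of the argument.
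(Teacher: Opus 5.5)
Your overall strategy is the same as the paper's. You use the same diagonal projection $J$ and transformer $\Gamma$. You correctly observe that $\|\mathcal{L}(\alpha)^*\|=\alpha^{-1/2}$ together with \eqref{Vc1lalpha} makes all bounds on $\mathcal{B}(\alpha)$ uniform in $\alpha$. The first- and second-order terms are correctly read off from $b_{nn}$ and $(\mathcal{B}\Gamma\mathcal{B})_{nn}$, and your third-order estimate is fine. One side remark on the ambient space: the paper simply takes $\mathfrak{U}=B(L^2(0,1))$ with $\|\mathcal{B}(\alpha)\|\le c_0$. The Hilbert--Schmidt option you mention cannot work, because $V\mathcal{L}(\alpha)$ is not compact for $V\neq 0$.

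\textbf{The gap is in the error term,} which carries the whole $\mathcal{O}(n^{-4k+2})$ claim. Your justification for the terms of order four and higher is that ``they form a geometric series controlled by the same contraction''. That gives no decay in $n$. The contraction of $\Phi$ is proved in a norm that is a supremum over all rows and columns (or the operator norm). Summing the series in that norm only yields $|R_{nn}|\le Cc_0^3$. To extract $n^{-4k+2}$ from the expansion, you would have to redo your Cauchy--Schwarz computation for every term of every order and control their combinatorially growing number. You have not done this, and it is not needed.

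\textbf{The missing step} is to use the fixed-point equation exactly instead of expanding it. Since $J((\Gamma X)D)=0$ for diagonal $D$, your form of the equation gives $JX_*=J\mathcal{B}+J(\mathcal{B}\Gamma X_*)$. So with $Y=X_*-\mathcal{B}$ the remainder is exactly
\[
R_{nn}=\sum_{j\neq n}\frac{b_{nj}Y_{jn}}{\mu_j-\mu_n},\qquad |R_{nn}|\le\frac{\|\mathcal{B}(\alpha)^*e_n\|\,\|Ye_n\|}{\gamma_n},\qquad \gamma_n=\min_{j\neq n}|\mu_j-\mu_n|.
\]
Next, $Y=\mathcal{B}\Gamma X_*-(\Gamma X_*)JX_*$ gives $Ye_n=\mathcal{B}(\Gamma X_*)e_n-(X_*)_{nn}(\Gamma X_*)e_n$, and $\|(\Gamma X_*)e_n\|\le\|X_*e_n\|/\gamma_n$. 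Hence
\[
\|Ye_n\|\le\bigl(\|\mathcal{B}\|+|(X_*)_{nn}|\bigr)\|X_*e_n\|/\gamma_n=\mathcal{O}(c_0^2/\gamma_n).
\]
Combining with the row bound $\|\mathcal{B}(\alpha)^*e_n\|\le\sqrt{2}c_0$ (which is where \eqref{Vc1lalpha} is needed) gives $|R_{nn}|=\mathcal{O}(c_0^3\gamma_n^{-2})=\mathcal{O}(n^{-4k+2})$, since $\gamma_n\ge\pi^{2k}n^{2k-1}$. All constants depend only on $c_0$, so the bound is uniform in $\alpha$.

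The paper organizes the estimate the same way, via $J(\mathcal{X}_*-\mathcal{B}-\mathcal{B}\Gamma\mathcal{B})=J(\mathcal{B}\Gamma(\mathcal{X}_*-\mathcal{B}))$. Note, though, that both factors $1/\gamma_n$ have to be taken from the $n$-th row and column as above. The norm of $\Gamma$ on $B(L^2(0,1))$ is a single number that does not depend on $n$.
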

\begin{remark}
Theorem~\ref{th1} also holds for $\alpha=1$. However, in this case, all
restrictions on $V$ are redundant. The asymptotics can be determined
for an arbitrary function $V\in L^2(0, 1)$. This result for the case of Dirichlet type
boundary conditions was obtained in \cite[Theorem~3]{PolyakovM2AS}.
The methodology from that manuscript can be repeated completely for the case
of Neumann type boundary condition.
\end{remark}
\begin{remark}\label{rem2}
The requirement that the constant $c_0$ be small can be relaxed. It can be any number
independent of $\alpha$. However, in this case the formulas \eqref{asymptDir} and
\eqref{asymptNeu} describe the behavior of the entire ensemble of eigenvalues
uniformly in $\alpha$, starting from some sufficiently large index.
The main distinctions and the necessary proofs are given in Appendix.
\end{remark}

Now we briefly discuss the main result. In Theorem~\ref{th1} we obtain a uniform
spectral asymptotics \eqref{asymptDir} for Dirichlet type boundary conditions
and \eqref{asymptNeu} for Neumann type boundary conditions. We emphasize once again that
this asymptotic formulas hold for \textit{all indices} $n$, not only for sufficiently large ones.
At the same time the error terms in \eqref{asymptDir} and \eqref{asymptNeu}
are bounded by the quantity $\widetilde{C}_1n^{-4k+2}$ for all $n$ with a constant
$\widetilde{C}_1$ independent of $\alpha$ and $n$. Therefore, we obtain the behavior of the
\textit{entire} ensemble of eigenvalues with the indices $n$ uniformly in $\alpha$.
The next main feature is the presence of a second approximation
term (namely, the terms with a sum). This term is bounded by the quantity
$\widetilde{C}_2n^{-2k+1}$, where the constant $\widetilde{C}_2$ is also
independent of $\alpha$ and $n$. Thus, we observe the decay rate of
the second approximation and the error term depending on the order of the differential
operator. Note that this is not always possible even for ordinary differential
operators (see~\cite{Akhmerova}).

If $\alpha=1$, then the asymptotics \eqref{asymptDir} becomes the known asymptotics
for a two--term even--order operator on an interval with the Dirichlet type
boundary conditions (see~\cite{PolyakovM2AS}).

In the case $\alpha\in (0, 1)$ the asymptotics~\eqref{asymptDir} and \eqref{asymptNeu}
exhibit qualitatively different behavior. In these formulas the high--frequency
phenomenon is not isolated in a separate term, but is structurally contained in
the second term. This significantly distinguishes it from the case where
the perturbation involves a translation (cf.~\cite{BorPolIzv}, \cite{BorPolM2AS}).
The second feature is the term with the sum in~\eqref{asymptDir} and \eqref{asymptNeu}.
Unlike in \cite{BorPolIzv} and \cite{PolyakovM2AS}, this term cannot be expressed
in an integral form that explicitly captures the interaction between the index $n$ and the
parameter~$\alpha$. This is essentially due to the nature of the perturbation
with homogeneous delay and it constitutes a second significant difference from the
case of a translation perturbation (cf. again \cite{BorPolIzv}, \cite{BorPolM2AS}).
Note that numerical examples show that this term
is fairly small and does not significantly affect the eigenvalue asymptotic
(in the examples given below, it is of the order of $10^{-6}$), and
the main contribution comes from the integral term in~\eqref{asymptDir}
and \eqref{asymptNeu}.

Now we consider the behavior of the eigenvalues for a specific example
as different values of $\alpha$. Let $k=2$, the Dirichlet type boundary conditions,
and
\[
V(x)=\frac{e^{ix}\sqrt{\alpha}}{20}.
\]
Then in this case $c_0=1/20<1/8$ and all the conditions of Theorem~\ref{th1} are satisfied.
The following two figures are given by $\alpha=0.1$ and $\alpha=0.9$.
The eigenvalues $\lambda_n$ are defined by \eqref{asymptDir} of the operator
$\mathcal{H}(\alpha)$ in all cases are localized along the red curve in the
complex plane. In addition to the curve, we also show three series of points.
The first series of points indicated by the blue squares corresponds the
asymptotics \eqref{asymptDir} in the case of $\alpha=0$. As we note above this situation is a degenerate case and
the eigenvalues of $\mathcal{H}(\alpha)$ coincide with the eigenvalues of the operator $\mathcal{A}$.
The second series indicated by green crosses depict the values of sums of the first three
terms in well--known asymptotics \eqref{asymptDir} if $\alpha=1$. Finally,
the series indicated by the red squares corresponds the asymptotics \eqref{asymptDir}.
\begin{figure}[t]
\begin{center}
\includegraphics[scale=0.5]{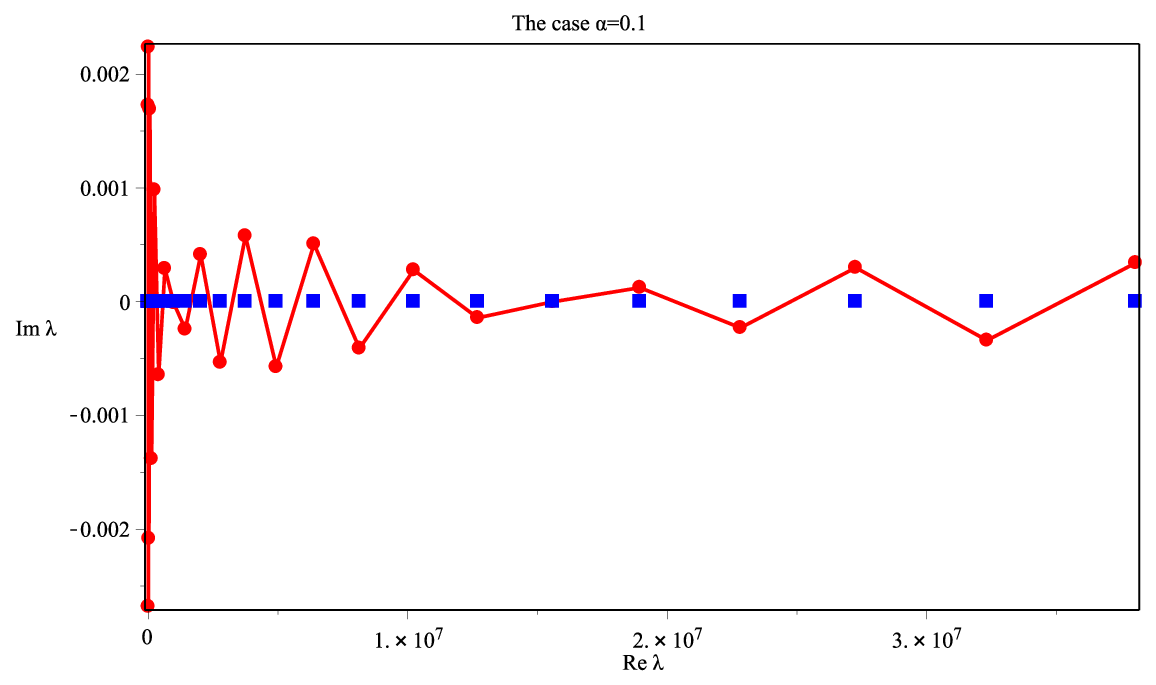}
\caption{\small The red points represent the leading terms of \eqref{asymptDir}
for the operator $\mathcal{H}(\alpha)$ with $V(x)=e^{ix}\sqrt{\alpha}/20$ and $k=2$. These points are connected by a red
curve to show their dependence on the index $n$. The blue squares correspond to the spectrum $(\pi n)^4$
of the operator $\mathcal{A}$. This Figure corresponds to the case $\alpha=0.1$.}
\end{center}
\end{figure}
\begin{figure}[t]
\begin{center}
\includegraphics[scale=0.5]{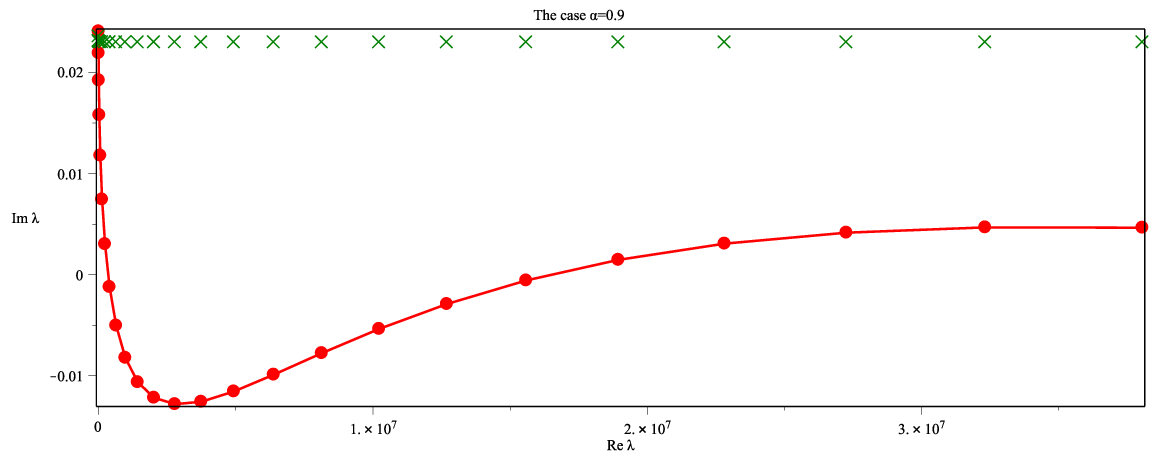}
\caption{\small This Figure corresponds to the case $\alpha=0.9$. The red points represent the terms of
\eqref{asymptDir} for the operator $\mathcal{H}(\alpha)$ with $V(x)=e^{ix}\sqrt{\alpha}/20$ and $k=2$.
These points are again connected by a red curve. The green crosses correspond to the well--known asymptotics
\eqref{asymptDir} for $\alpha=1$.}
\end{center}
\end{figure}

The figures reveal a significant gap between the terms in \eqref{asymptDir}
and the stationary cases for $\alpha=0$ and $\alpha=1$. This gap manifests
the high--frequency phenomenon described above. However, this effect behaves
differently for different $\alpha$. In particular, for $\alpha=0.1$ Figure 1 clearly
shows oscillations that decay as the eigenvalue index increases and tend to the
eigenvalues of the unperturbed operator $\mathcal{A}$.
This is because $\sin\pi n\alpha x$ varies slowly, so the integral term makes a
significant contribution to the asymptotic expression. Moreover, the points in
\eqref{asymptDir} are closest to the eigenvalues of $\mathcal{A}$ precisely when
$n\alpha$ is an integer.

A completely opposite phenomenon occurs at $\alpha=0.9$. Here $\sin\pi n\alpha x$
is close to $\sin\pi nx$ and is a rapidly oscillating function. The integral averages
out and is approximately equal to $\int_0^1V(x)\,dx$, as can be seen in Figure 2.
The imaginary part of $\lambda_n$ is relatively small and decays monotonically, approaching
the value $\mathrm{Im}\,\int_0^1V(x)\,dx$ as $n$ increases. Consequently,
oscillations are virtually absent in this case.

Based on our observations, we conclude that the high--frequency phenomenon in the case
of the operator with homogeneous delay consists of damped oscillations of the
eigenvalues, with amplitude decaying as $\mathcal{O}(n^{-1})$ for a fixed
$\alpha\in (0, 1)$. This qualitatively differs from the case where a translation
operator is present instead of homogeneous delay. In particular, \cite{BorPolIzv} and
\cite{BorPolM2AS} studied the Schr\"odinger operator with a translation in
the lowest--order term. The eigenvalue asymptotics in that case contained
a nontrivial high--frequency term of order $\mathcal{O}(1)$. Hence, homogeneous delay
produces a less pronounced high--frequency phenomenon. Moreover, unlike the
translation operator, the homogeneous delay operator does not converge in the
uniform operator norm as $\alpha\to 1$, therefore, it is not continuous in the uniform
operator topology.

We do not treat the case of Neumann type boundary conditions separately,
as it is handled in a similar manner and the corresponding operator exhibits the
same nontrivial effects. Their nature is exactly the same as in the Dirichlet case.
Essentially, the resulting graphics will be shifted, but the overall structure
will be preserved.

\section{$m$--sectoriality and compactness of resolvent}\label{sec3*}

In this section we prove Theorem~\ref{th0}. Now we consider the
sesquilinear form~\eqref{2.1} in $L^2(0, 1)$ on the domain
$\mathfrak{D}(\mathfrak{h}^\alpha)=\{u\in W_2^k(0, 1)\}$. Using the properties
of the operator $\mathcal{L}$, we obtain that this form can be rewritten as
\[
\mathfrak{h}^\alpha(u, v)=\int_0^1 u^{(k)}(x)\overline{v^{(k)}(x)}\,dx
+\int_0^1 V(x)u(\alpha x)\overline{v(x)}\,dx.
\]
Now we estimate $\mathrm{Re}\,\mathfrak{h}^\alpha(u, u)$ and
$\mathrm{Im}\,\mathfrak{h}^\alpha(u, u)$. Using the H\"older inequality and
\eqref{Vc1lalpha}, we have
\begin{align*}
\bigg|\int_0^1 V(x)u(\alpha x)\overline{u(x)}\,dx\bigg| &\leqslant
\|V\|_{L^\infty(0, 1)}\bigg(\int_0^1|u(\alpha x)|^2\,dx\bigg)^\frac{1}{2}
\|u\|_{L^2(0, 1)} \\
&\leqslant \frac{\|V\|_{L^\infty(0, 1)}\|u\|_{L^2(0, 1)}^2}{\sqrt{\alpha}}\leqslant
c_0\|u\|_{L^2(0, 1)}^2,
\end{align*}
where $u\in\mathfrak{D}(\mathfrak{h}^\alpha)$ and $c_0>0$ is independent
of $\alpha$. This estimate and standard interpolation inequalities immediately give
\begin{equation}\label{estReIm}
\mathrm{Re}\,\mathfrak{h}^\alpha(u, u)+c_1\|u\|^2_{L^2(0, 1)}\geqslant
\frac{1}{2}\|u\|^2_{W^k_2(0, 1)}, \qquad
|\mathrm{Im}\,\mathfrak{h}^\alpha(u, u)|\leqslant c_0\|u\|_{L^2(0, 1)}^2,
\end{equation}
for any $u\in\mathfrak{D}(\mathfrak{h}^\alpha)$, where $c_0$ and $c_1$
are some positive constants independent of $\alpha$. Thus,
\[
|\mathrm{Im}\,\mathfrak{h}^\alpha(u, u)|\leqslant
c_0\Big(\mathrm{Re}\,\mathfrak{h}^\alpha(u, u)+c_1\|u\|^2_{L^2(0, 1)}\Big),
\qquad u\in\mathfrak{D}(\mathfrak{h}^\alpha),
\]
and, therefore,
\begin{equation}\label{subset}
\mathfrak{h}^\alpha(u, u)\subseteq \{z\in\mathbb{C}:
|\mathrm{Im}\,z|\leqslant c_0(\mathrm{Re}\,z+c_1)\}, \qquad
u\in\mathfrak{D}(\mathfrak{h}^\alpha), \qquad \|u\|_{L^2(0, 1)}=1.
\end{equation}
Hence, the form $\mathfrak{h}^\alpha$ is sectorial. Moreover, the estimates
\eqref{estReIm} show that this form is closed. It follows from the first
representation theorem \cite[Ch. V\!I, Sect. 2.1, Thm.~2.1]{Kato} that there
exists an $m$--sectorial operator is associated with this form. We denote this operator by
$\widetilde{\mathcal{H}}(\alpha)$.

It remains to show that $\widetilde{\mathcal{H}}(\alpha)=\mathcal{H}(\alpha)$.
Let $u\in\mathfrak{D}(\mathcal{H}(\alpha))$, $v\in\mathfrak{D}(\mathfrak{h}^\alpha)$.
Using integration by parts, we obtain
\[
(\mathcal{H}(\alpha)u, v)_{L^2(0, 1)}=\mathfrak{h}^\alpha(u, v).
\]
This gives that $\mathfrak{D}(\mathcal{H}(\alpha))\subseteq
\mathfrak{D}(\widetilde{\mathcal{H}}(\alpha))$ and the operator
$\widetilde{\mathcal{H}}(\alpha)$ is an extension of the operator
$\mathcal{H}(\alpha)$.

Now we prove the reverse inclusion $\mathfrak{D}(\mathcal{H}(\alpha))\supseteq
\mathfrak{D}(\widetilde{\mathcal{H}}(\alpha))$. According to the first representation
theorem \cite[Ch. V\!I, Sect.~2.1, Thm.~2.1]{Kato}, the domain of the operator
$\widetilde{\mathcal{H}}(\alpha)$ consists of the functions
$u\in\mathfrak{D}(\mathfrak{h}^\alpha)$ such that there exists an associated
function $h\in L^2(0, 1)$ satisfying the identity
\begin{equation}\label{3.1}
\mathfrak{h}^\alpha(u, v)=(h, v)_{L^2(0, 1)}\qquad\text{for all}\qquad
v\in\mathfrak{D}(\mathfrak{h}^\alpha).
\end{equation}
Let $u$ be one of such functions. It follows from the definition of the form
in \eqref{2.1} that the identity \eqref{3.1} can be rewritten as
\[
(u^{(k)}, v^{(k)})_{L^2(0, 1)}=(\widetilde{h}, v)_{L^2(0, 1)}\quad\text{for all}
\quad v\in\mathfrak{D}(\mathfrak{h}^\alpha),\qquad
\widetilde{h}:=h-V\mathcal{L}u\in L^2(0, 1).
\]
Therefore, the function $u$ is a generalized solution to the problems
\[
(-1)^ku^{(2k)}=\widetilde{h} \quad \text{on} \quad (0, 1), \qquad u^{(2j)}(0)=u^{(2j)}(1)=0,
\quad 0\leqslant j\leqslant k-1,
\]
or
\[
(-1)^ku^{(2k)}=\widetilde{h} \quad \text{on} \quad (0, 1),
\qquad u^{(2j+1)}(0)=u^{(2j+1)}(1)=0,
\quad 0\leqslant j\leqslant k-1,
\]
and by the standard smoothness improving theorems of solutions
to elliptic boundary value problems we immediately conclude that
$\mathfrak{D}(\mathcal{H}(\alpha))=\{y\in W^{2k}_2(0, 1)\}$ with
boundary conditions of Dirichlet or Neumann types. This gives
$\mathfrak{D}(\widetilde{\mathcal{H}}(\alpha))\subseteq
\mathfrak{D}(\mathcal{H}(\alpha))$. Therefore,
$\mathfrak{D}(\widetilde{\mathcal{H}}(\alpha))
=\mathfrak{D}(\mathcal{H}(\alpha))$ and
$\widetilde{\mathcal{H}}(\alpha)=\mathcal{H}(\alpha)$. Thus, the operator
$\mathcal{H}(\alpha)$ is associated with the sectorial form~$\mathfrak{h}^\alpha$,
and it is hence $m$--sectorial. The relation \eqref{subset} shows that there exists
a chosen $\Lambda\in\mathbb{R}$ independent of $\alpha$ and such that the
half--plane $\{\lambda\in\mathbb{C}: \mathrm{Re}\,\lambda\leqslant\Lambda\}$
is in the resolvent set of the operator $\mathcal{H}(\alpha)$ for each $\alpha\in (0, 1)$.

The resolvent of the operator $\mathcal{H}(\alpha)$ is a bounded operator in
$L^2(0, 1)$. It follows from the Banach theorem on inverse operator that it is a
bounded operator from $L^2(0, 1)$ into $W_2^k(0, 1)$. The compactness of the
embedding of the space  $W_2^k(0, 1)$ into $L^2(0, 1)$ implies the
compactness of  resolvent of operator $\mathcal{H}(\alpha)$.
Using the theorem on operators with compact resolvent
\cite[Ch.~3, Sect.~6.8, Thm.~6.29]{Kato}, the spectrum of the operator
$\mathcal{H}(\alpha)$ consists of countably many eigenvalues
with the only accumulation point at infinity. Theorem~\ref{th0} is proved.

\section{Basic facts about the method of similar operators}\label{sec4}

Before we begin our study of spectral properties of the operator $\mathcal{H}(\alpha)$,
we give the basic principles of the primary research method. As noted in Introduction,
it is the method of similar operators. The main idea of this method is to construct
a similarity transformation between original operator and some block--diagonal
operator with simple structure.

Now we briefly formulate the main notation of the method of similar operators
for some abstract operator such that its spectral properties coincide
with the properties of the operator $\mathcal{H}(\alpha)$. Note that all facts
in this section can be found in \cite[\S~2, 3]{BaskKrUsk}.

Let $H$ be a separable complex Hilbert space and $B(H)$ be the Banach space of
all bounded linear operators in $H$. The symbol $A: D(A)\subset H\to H$
stands for some abstract self--adjoint, closed, densely defined linear operator.
By $\sigma(A)$ and $\rho(A)$ we denote the spectrum and the resolvent set of
the operator $A$. Suppose that
\[
\sigma(A)=\bigcup_{n\geqslant 0}\{\lambda_n\},
\]
where $\lambda_n$ is a simple eigenvalue of $A$.
Moreover, $AP_n=\lambda P_n$, where $P_n=P(\{\lambda_n\}, A)$ is the spectral Riesz
projection corresponding to the spectral component $\{\lambda_n\}$. We shall refer
to the operator $A$ as unperturbed operator. As a perturbation we consider some
linear operator $B: D(B)\subset H\to H$ is subordinated to $A$. The operator
$B$ is subordinate to A if $D(B)\supseteq D(A)$ and $\|B\|_A=\inf\{c_2>0:
\|Bx\|\leqslant c_2\big(\|x\|+\|Ax\|\big), x\in D(A)\}<\infty$. The space of these
operators with domain equal $D(A)$ is the Banach space with respect to the norm
$\|\cdot\|_A$ and we denote it by $\mathfrak{L}_A(H)$.

Now we recall some basic facts about similar operators.
\begin{definition}\label{def1}
Two linear operators $A_j: D(A_j)\subset H\to H$, $j=1, 2$, are called similar,
if there exists a continuously invertible operator $U\in B(H)$ such that
$A_1Ux=UA_2x$, $x\in D(A_2)$, $UD(A_2)=D(A_1)$. The operator $U$ is said
similarity transform of $A_1$ into $A_2$.
\end{definition}

Directly from Definition~\ref{def1}, we have the following result.
\begin{lemma}\label{lemsim}
Suppose that $A_j: D(A_j)\subset H\to H$, $j=1, 2$, are two similar operators
and $U$ is the similarity transform of $A_1$ into $A_2$. Then
$\sigma(A_1)=\sigma(A_2)$, $\sigma_p(A_1)=\sigma_p(A_2)$, $\sigma_c(A_1)=\sigma_c(A_2)$,
where $\sigma_p$ and $\sigma_c$ are the point spectrum and the continuous spectrum,
respectively.
\end{lemma}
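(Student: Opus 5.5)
The argument is a direct translation of Definition~\ref{def1}. Throughout, $U\in B(H)$ is continuously invertible, $UD(A_2)=D(A_1)$, and $A_1U=UA_2$ on $D(A_2)$. The first step is to rewrite this intertwining relation in a form adapted to spectral parameters. For $\lambda\in\mathbb{C}$ and $x\in D(A_2)$,
\[
(A_1-\lambda I)Ux=U(A_2-\lambda I)x .
\]
Since $U$ maps $D(A_2)$ bijectively onto $D(A_1)$, this gives the operator identity
\[
A_1-\lambda I=U(A_2-\lambda I)U^{-1}\quad\text{on } D(A_1).
\]
Everything else reduces to properties of the conjugation $T\mapsto UTU^{-1}$ by a bounded invertible operator.

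\emph{Point spectrum.} Suppose $x\in D(A_2)$, $x\neq0$, and $A_2x=\lambda x$. Then $Ux\in D(A_1)$, $Ux\neq 0$ because $U$ is injective, and $A_1Ux=UA_2x=\lambda Ux$. Hence $\sigma_p(A_2)\subseteq\sigma_p(A_1)$. For the reverse inclusion, note that $U^{-1}$ is a similarity transform of $A_2$ into $A_1$: applying $U^{-1}$ on both sides of the intertwining relation gives $A_2U^{-1}y=U^{-1}A_1y$ for $y\in D(A_1)$, and $U^{-1}D(A_1)=D(A_2)$. The same argument then yields $\sigma_p(A_1)\subseteq\sigma_p(A_2)$.

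\emph{Resolvent set and spectrum.} Let $\lambda\in\rho(A_2)$. Define
\[
R=U(A_2-\lambda I)^{-1}U^{-1},
\]
which is bounded because it is a product of bounded operators. Its range is $UD(A_2)=D(A_1)$. Using the conjugation identity, one checks $(A_1-\lambda I)R=I$ on $H$ and $R(A_1-\lambda I)=I$ on $D(A_1)$. Thus $\lambda\in\rho(A_1)$, and by symmetry (again using $U^{-1}$) $\rho(A_1)=\rho(A_2)$. Taking complements gives $\sigma(A_1)=\sigma(A_2)$.

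\emph{Continuous spectrum.} The continuous spectrum is defined by two conditions: $A-\lambda I$ is injective, and its range is dense but not all of $H$. From the conjugation identity,
\[
\operatorname{Ran}(A_1-\lambda I)=U\operatorname{Ran}(A_2-\lambda I).
\]
A bounded invertible operator is a homeomorphism of $H$, so it preserves both density and the property of being a proper subspace. Injectivity is preserved by the point-spectrum step. Hence $\sigma_c(A_1)=\sigma_c(A_2)$.

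No step is a real obstacle. The only point needing care is keeping track of domains, namely $UD(A_2)=D(A_1)$, so that the conjugated resolvent really maps into $D(A_1)$ and the operator identities hold on the correct domains.
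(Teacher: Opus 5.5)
Your proof is correct and is the same direct verification from Definition~\ref{def1} that the paper relies on; the paper gives no argument beyond saying the lemma follows directly from that definition. Your conjugation identity $A_1-\lambda I=U(A_2-\lambda I)U^{-1}$ on $D(A_1)$, together with the observation that $U^{-1}$ is a similarity transform in the reverse direction, fills in exactly the routine details the paper omits.
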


The main object of this section is the perturbed operator $A-B: D(A)\subset H\to H$.
The goal of the method is to determine an operator $Q$ such that $A-B$ is similar
to $A-Q$ and the spectral properties of $A-Q$ are in some sense close to those of $A$.
Now we introduce the key notion of the method of similar operators.
\begin{definition}[\cite{BaskKrUsk}]\label{defadm}
Suppose that $A$ is a closed, densely defined linear operator, $\mathfrak{U}$
is a linear subspace of $\mathfrak{L}_A(H)$, $J:\mathfrak{U}\to\mathfrak{U}$,
and $\Gamma:\mathfrak{U}\to B(H)$. The collection $(\mathfrak{U}, J, \Gamma)$ is an
admissible triplet for the operator $A$ and $\mathfrak{U}$ is the space of
admissible perturbations, if the following properties hold.

1) The space $\mathfrak{U}$ is a Banach space (with respect to some norm
$\|\cdot\|_{\mathfrak{U}}$) that is continuously embedded in $\mathfrak{L}_A(H)$.

2) $J$ and $\Gamma$ are continuous linear operators and, moreover, $J$ is a projection.

3) $(\Gamma X)D(A)\subset D(A)$ and
\begin{equation}\label{adA}
A(\Gamma X)x-(\Gamma X)Ax=(X-JX)x
\end{equation}
for all $x\in D(A)$, $X\in\mathfrak{U}$. Moreover, $Y=\Gamma X\in B(H)$
is the unique solution of the equation
\[
AY-YA=X-JX,
\]
that satisfies $JY=0$.

4) $X\Gamma Y$, $(\Gamma X)Y\in\mathfrak{U}$ for all $X$, $Y\in\mathfrak{U}$,
and there is a constant $\gamma>0$ such that
\[
\|\Gamma\|\leqslant \gamma, \qquad \max\{\|X\Gamma Y\|_{\mathfrak{U}},
\|(\Gamma X)Y\|_{\mathfrak{U}}\}\leqslant \gamma\|X\|_{\mathfrak{U}}\|Y\|_{\mathfrak{U}}.
\]

5) For every $X\in\mathfrak{U}$ and $\delta>0$ there exists a number
$\lambda_\delta\in\rho(A)$ such that~$\|X(A-\lambda_\delta I)^{-1}\|<\delta$.
\end{definition}

In order to obtain an idea about this definition, one should think of the operators
involved in terms of their matrices. We suppose that the operator $A$ is represented
by a diagonal matrix and the operator $B$ has a matrix with some kind of off--diagonal
decay. The transform $J$ should be thought of as a projection that picks out the
main diagonal of an infinite matrix, whereas the transform $\Gamma$ annihilates the main
diagonal. Moreover, in this case, the operator $\Gamma$ enhances the off--diagonal damping.

Now we formulate the main theorem of the method of similar operators.
\begin{theorem}[\cite{BaskKrUsk}]\label{thsimgen}
Suppose that $(\mathfrak{U}, J, \Gamma)$ is an admissible triplet for the operator $A$
and the operator $B$ belongs to $\mathfrak{U}$. If
\begin{equation}\label{mainest}
\gamma\|J\|_{\mathfrak{U}}\|B\|_{\mathfrak{U}}<\frac{1}{4},
\end{equation}
where $\gamma$ is defined in Definition~\ref{defadm}, then the operator $A-B$ is similar
to the operator $A-JX_*$, where $X_*$ is a solution of the nonlinear equation
\[
X=B\Gamma X-(\Gamma X)JX+B=\Phi(X).
\]
The operator $\Phi: \mathfrak{U}\to\mathfrak{U}$ is a contraction and has unique
fixed point $X_*$ in the ball
$\{X\in\mathfrak{U}: \|X-B\|_{\mathfrak{U}}\leqslant 3\|B\|_{\mathfrak{U}}\}$,
which can be found as a limit of simple iterations: $X_0=0$, $X_1=\Phi(X_0)=B$, etc.
The invertible operator $I+\Gamma X_*\in B(H)$ is the similarity transform
of $A-B$ to $A-JX_*$.
\end{theorem}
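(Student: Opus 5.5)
The proof has two parts: a fixed-point argument for $\Phi$ in $\mathfrak{U}$, and an algebraic verification, based on the commutator identity \eqref{adA}, that $I+\Gamma X_*$ intertwines $A-B$ and $A-JX_*$. Throughout we use only properties 1)--5) of Definition~\ref{defadm}. We also use that $\|J\|_{\mathfrak{U}}\geqslant 1$, which holds because $J$ is a nonzero projection; so \eqref{mainest} gives in particular $\gamma\|B\|_{\mathfrak{U}}<1/4$.

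\emph{Step 1: $\Phi$ is a contraction of a ball.} By property 4), $\Phi$ maps $\mathfrak{U}$ into $\mathfrak{U}$. For $X$ with $\|X-B\|_{\mathfrak{U}}\leqslant 3\|B\|_{\mathfrak{U}}$, so that $\|X\|_{\mathfrak{U}}\leqslant 4\|B\|_{\mathfrak{U}}$, property 4) gives
\[
\|\Phi(X)-B\|_{\mathfrak{U}}\leqslant \gamma\|B\|_{\mathfrak{U}}\|X\|_{\mathfrak{U}}+\gamma\|J\|_{\mathfrak{U}}\|X\|_{\mathfrak{U}}^2 .
\]
For two points of the ball we write
\[
\Phi(X)-\Phi(Y)=B\Gamma(X-Y)-(\Gamma(X-Y))JX-(\Gamma Y)J(X-Y),
\]
which yields a Lipschitz constant of order $\gamma\|B\|_{\mathfrak{U}}+8\gamma\|J\|_{\mathfrak{U}}\|B\|_{\mathfrak{U}}$. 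The Banach fixed point theorem then gives a unique fixed point $X_*$, obtained as the limit of the iterations $X_0=0$, $X_1=B,\dots$.

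I expect the constant bookkeeping here to be the main obstacle. With the crude estimates above, \eqref{mainest} alone does not close the estimate for radius $3\|B\|_{\mathfrak{U}}$. Comparing with the scalar majorant equation $r=\|B\|+\gamma\|B\|r+\gamma\|J\|r^2$ shows that one must exploit the structure more carefully. What I would try first is to split $X=B+Z$ with $\|Z\|_{\mathfrak{U}}$ small and to use that $JX$ enters only through the term $(\Gamma X)JX$. Failing that, I would run the argument in the smallest invariant ball that the majorant equation provides, and then check that this ball lies inside the one stated in the theorem, so that uniqueness there still holds.

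\emph{Step 2: similarity.} Put $U=I+\Gamma X_*$. Since $\|\Gamma X_*\|_{B(H)}\leqslant\gamma\|X_*\|_{\mathfrak{U}}<1$ in the regime above, the Neumann series shows that $U$ is continuously invertible in $B(H)$. By property 3), $UD(A)\subset D(A)$, and for $x\in D(A)$ the identity \eqref{adA} gives
\[
AUx-U(A-JX_*)x=(X_*+(\Gamma X_*)JX_*)x,\qquad BUx=(B+B\Gamma X_*)x .
\]
Hence $(A-B)Ux=U(A-JX_*)x$ holds exactly when $X_*=B\Gamma X_*-(\Gamma X_*)JX_*+B$, that is, when $X_*$ is the fixed point of $\Phi$.

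To get the domain equality $UD(A)=D(A)$ required by Definition~\ref{def1}, I would use property 5). Choose $\lambda_\delta\in\rho(A)$ with $\|JX_*(A-\lambda_\delta I)^{-1}\|<\delta$ small. Then $A-JX_*-\lambda_\delta I$ is invertible, and the same holds for $A-B-\lambda_\delta I$, using $B\in\mathfrak{L}_A(H)$ and property 5) for $B$. The intertwining relation then gives $U(A-JX_*-\lambda_\delta)^{-1}=(A-B-\lambda_\delta)^{-1}U$, which yields $U D(A)=D(A)$. Lemma~\ref{lemsim} applies afterwards.
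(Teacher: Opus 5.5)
The paper gives no proof of this statement; it quotes it from \cite{BaskKrUsk}, so your proposal has to stand on its own. Your Step~2 algebra is the standard verification and is correct: for $U=I+\Gamma X_*$, the relation $(A-B)U=U(A-JX_*)$ on $D(A)$ is equivalent to $X_*=B\Gamma X_*-(\Gamma X_*)JX_*+B$.

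\textbf{The gap is Step~1.} You leave it open, and neither remedy you suggest can close it. The majorant route fails in part of the allowed range. For $\|J\|=1$, $\beta=\gamma\|B\|$ and $s=\gamma r$, your equation becomes $s=\beta+\beta s+s^2$, which has no real root once $\beta>3-2\sqrt2\approx0.17$.

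More fundamentally, the map $\Phi(X)=B\Gamma X-(\Gamma X)JX+B$, as literally written, need not be a contraction on the stated ball. Take the following data:
\begin{itemize}
\item $H=\mathbb{C}^2$, $A=\mathrm{diag}(0,1)$, $J$ the diagonal part and $\Gamma$ as in \eqref{gamma}, so that $\gamma=\|J\|=1$;
\item $B=\mathrm{diag}(b,-b)$ with $1/5<b<1/4$;
\item $X_1=\mathrm{diag}(b,-4b+\delta)$ and $X_2=X_1+hE_{12}$, where $E_{12}$ is the matrix unit and $0<h\leqslant\delta<5b-1$.
\end{itemize}
Both points lie in $\{\|X-B\|\leqslant 3b\}$. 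Since $X_1$ is diagonal, $\Phi(X_1)=B$, and a direct computation gives $\Phi(X_2)-\Phi(X_1)=-(5b-\delta)hE_{12}$. The Lipschitz ratio is therefore $5b-\delta>1$, so no bookkeeping will produce a contraction for this $\Phi$.

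\textbf{The missing idea} is the structural identity $J((\Gamma X)JY)=0$. The paper records it right after \eqref{estJG}; it is not among properties 1)--5). Applying $J$ to the equation shows that every fixed point satisfies $JX=JB+J(B\Gamma X)$. So the factor $JX$ in the quadratic term, which can be as large as $4\|J\|\|B\|$, can be replaced by a quantity of size about $\|J\|\|B\|$. One then applies the Banach theorem to
\[
\widetilde\Phi(X)=B\Gamma X-(\Gamma X)JB-(\Gamma X)J(B\Gamma X)+B .
\]
Set $\varepsilon=\gamma\|J\|\|B\|<1/4$, so that $\gamma\|B\|\leqslant\varepsilon$, and take $\|X\|\leqslant 4\|B\|$. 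Property 4) then gives $\|\widetilde\Phi(X)-B\|\leqslant(8\varepsilon+16\varepsilon^2)\|B\|<3\|B\|$ and a Lipschitz constant at most $2\varepsilon+8\varepsilon^2<1$. These two inequalities are exactly where the constants $1/4$ and $3$ come from.

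Applying $J$ again shows that $\Phi$ and $\widetilde\Phi$ have the same fixed points. Hence $X_*$ solves the equation your Step~2 needs and is unique in the ball. Only the contraction and simple-iteration claims must be read for $\widetilde\Phi$ rather than $\Phi$.

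\textbf{A smaller repair in Step~2.} Property 5) supplies $\lambda_\delta$ for one operator at a time. So you cannot make $A-B-\lambda$ and $A-JX_*-\lambda$ invertible at the same $\lambda$, and with only one of them invertible the intertwining yields only injectivity or only surjectivity. Apply 5) to the single operator $X_*-JX_*$ instead. By \eqref{adA},
\[
(A-\lambda I)(\Gamma X_*)x=(\Gamma X_*)(A-\lambda I)x+(X_*-JX_*)x .
\]
Hence, in the complete norm $\|(A-\lambda I)x\|$ on $D(A)$, the operator $\Gamma X_*$ has norm at most $\|\Gamma X_*\|+\|(X_*-JX_*)(A-\lambda I)^{-1}\|$. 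For suitable $\lambda$ this is $<1$, and then $I+\Gamma X_*$ maps $D(A)$ onto itself.
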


\section{Similarity transformation for the operator $\mathcal{H}(\alpha)$}
In this section we apply the scheme from Section~\ref{sec4} to the operator
$\mathcal{H}(\alpha)$. The Hilbert space $H$ will be the space $L^2(0, 1)$.
We use the even--order operator $\mathcal{A}$ as the
unperturbed operator~$A$. The spectral properties of this operator are well known. It is
self--adjoint operator with a compact resolvent. Its eigenvalues, associated
eigenfunctions and the Riesz projections have the form
\begin{equation}\label{2.0}
\mu_n:=(\pi n)^{2k},\quad e_n(x):=\sqrt{2}\sin\pi n x,
\quad \mathcal{P}_nx:= (x, e_n)_{L^2(0, 1)}e_n, \quad x\in [0, 1], \quad n\geqslant 1,
\end{equation}
for the Dirichlet type boundary conditions and
\begin{equation}\label{3.0}
\mu_n:=(\pi n)^{2k},\quad e_n(x):=\sqrt{2}\cos\pi nx,
\quad \mathcal{P}_nx:= (x, e_n)_{L^2(0, 1)}e_n, \quad x\in [0, 1],\quad
n\geqslant 0,
\end{equation}
for the Neumann type boundary conditions. All eigenvalues are simple and
the system of eigenfunctions forms an orthonormal basis in $L^2(0, 1)$. Since the
spectral properties of the operator $\mathcal{A}$ with different boundary conditions
are quite similar, and the spectrums are structurally identical (except for one point),
we use the same notations, where this will not cause misunderstanding.

Now we construct an admissible triplet for the unperturbed operator $\mathcal{A}$.
As the space of admissible perturbations $\mathfrak{U}$ we take the space
$B(L^2(0, 1))$. Further, we consider a bounded operator
$\mathcal{X}: D(\mathcal{X})\subset L^2(0, 1)\to L^2(0, 1)$. This operator
is completely characterized by its operator matrix, which will be identified with
it and denoted by the same symbol. The entries of the matrices are the operators
$\mathcal{X}_{lj}=\mathcal{P}_l\mathcal{X}\mathcal{P}_j$, $l$, $j\in\mathbb{J}$,
where $\mathbb{J}=\{\mathbb{N}, \mathbb{N}\cup\{0\}\}$. Now we define the
operators $J$, $\Gamma: \mathfrak{U}\to\mathfrak{U}$ via their operator matrices
by the formulas
\begin{equation}\label{j}
(J\mathcal{X})_{lj}=\delta_{l-j}\mathcal{X}_{lj}, \quad l, j\in\mathbb{J}, \quad
\mathcal{X}\in\mathfrak{U},
\end{equation}
\begin{equation}\label{gamma}
(\Gamma\mathcal{X})_{lj}=\frac{\mathcal{X}_{lj}}{\mu_l-\mu_j},
\qquad l\ne j, \qquad (\Gamma\mathcal{X})_{ll}=0, \qquad l, j\in\mathbb{J},
\qquad \mathcal{X}\in\mathfrak{U}.
\end{equation}
It follows from \cite[Lemma~3.4]{BaskKrUsk} and \cite[Theorem~1.6]{Bask83} that these operators are well--defined,
bounded, the operator $J$ is a projection, and the following estimates hold
\begin{equation}\label{estJG}
\|J\|_{B(L^2(0, 1))}=1, \qquad \|\Gamma\|_{B(L^2(0, 1))}\leqslant\frac{\pi}{2\gamma_n}\leqslant\frac{2}{\gamma_n},
\qquad \gamma_n=\min_{l\ne n\in\mathbb{J}}|\mu_n-\mu_l|.
\end{equation}
Note that these operators satisfy the property $J((\Gamma\mathcal{X})J\mathcal{Y})=0$
for all $\mathcal{X}$, $\mathcal{Y}\in B(L^2(0, 1))$.

Thus, we have constructed a triplet $(\mathfrak{U}, J, \Gamma)$.
According to Definition~\ref{defadm}, we now establish that it is admissible triplet.
\begin{lemma}\label{lh1}
The triplet $(\mathfrak{U}, J, \Gamma)$ is admissible.
\end{lemma}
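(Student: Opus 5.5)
We verify the five properties of Definition~\ref{defadm} in turn for $\mathfrak{U}=B(L^2(0,1))$ with the operator norm, and $J$, $\Gamma$ given by \eqref{j}, \eqref{gamma}. Most of the work has already been done by the estimates \eqref{estJG}; the remaining points are algebraic identities for operator matrices and one compactness argument.

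\emph{Properties 1) and 2).} Property 1) follows from $\|X\|_{\mathcal{A}}\leqslant\|X\|_{B(L^2(0,1))}$: for $x\in D(\mathcal{A})$ we have $\|Xx\|\leqslant\|X\|\|x\|\leqslant\|X\|(\|x\|+\|\mathcal{A}x\|)$, so $\mathfrak{U}$ embeds continuously into $\mathfrak{L}_{\mathcal{A}}(L^2(0,1))$. Property 2) is contained in \eqref{estJG}. $J$ is linear, $J^2=J$, and $\|J\|=1$, since $JX=\sum_l\mathcal{P}_lX\mathcal{P}_l$ and the $\mathcal{P}_l$ are mutually orthogonal rank-one projections. $\Gamma$ is linear and bounded with $\|\Gamma\|\leqslant 2/\gamma_*$, where $\gamma_*=\inf_n\gamma_n>0$ because $\mu_n=(\pi n)^{2k}$ has gaps bounded below. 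We set $\gamma:=2/\gamma_*$.

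\emph{Property 3).} For $x\in D(\mathcal{A})$ we test against $e_l$ and use $\mathcal{P}_l\mathcal{A}=\mathcal{A}\mathcal{P}_l=\mu_l\mathcal{P}_l$. This gives
\[
\mathcal{P}_l\big(\mathcal{A}\Gamma X-(\Gamma X)\mathcal{A}\big)\mathcal{P}_j=(\mu_l-\mu_j)(\Gamma X)_{lj}=X_{lj}-(JX)_{lj}.
\]
To justify $(\Gamma X)D(\mathcal{A})\subset D(\mathcal{A})$, take $y=\Gamma X x$ and compute $\mu_l(y,e_l)=\sum_{j\ne l}\mu_l(Xe_j,e_l)x_j/(\mu_l-\mu_j)$. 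Writing $\mu_l/(\mu_l-\mu_j)=1+\mu_j/(\mu_l-\mu_j)$ yields
\[
\mathcal{P}_l\mathcal{A}y=\mathcal{P}_l\big((X-JX)x+(\Gamma X)\mathcal{A}x\big),
\]
and the right-hand side lies in $L^2(0,1)$. Hence $y\in D(\mathcal{A})$ and \eqref{adA} holds. For uniqueness, let $Y$ be a solution with $JY=0$ and $\mathcal{A}Y-Y\mathcal{A}=X-JX$. Its matrix entries satisfy $(\mu_l-\mu_j)Y_{lj}=X_{lj}$ for $l\ne j$, and $Y_{ll}=0$. Therefore $Y_{lj}=(\Gamma X)_{lj}$ for all $l,j$, and $Y=\Gamma X$.

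\emph{Property 4).} Since $\mathfrak{U}$ is the full algebra $B(L^2(0,1))$, the products $X\Gamma Y$ and $(\Gamma X)Y$ automatically belong to $\mathfrak{U}$. Submultiplicativity of the operator norm together with $\|\Gamma\|\leqslant\gamma$ gives
\[
\|X\Gamma Y\|,\ \|(\Gamma X)Y\|\leqslant\gamma\|X\|\|Y\|.
\]

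\emph{Property 5).} This is the only step that is not purely algebraic. The simplest route is to take $\lambda_\delta=-t$ with $t>0$ large. Since $\mathcal{A}\geqslant 0$ is self-adjoint, $\|(\mathcal{A}+t)^{-1}\|\leqslant 1/t$, so
\[
\|X(\mathcal{A}+t)^{-1}\|\leqslant\|X\|/t<\delta \quad\text{for } t>\|X\|/\delta .
\]
I expect the main care to be needed in property 3), specifically in the domain inclusion $(\Gamma X)D(\mathcal{A})\subset D(\mathcal{A})$ and in passing from the matrix-entry identity to the operator identity. Both rest on the eigenfunction expansion with respect to the basis $\{e_n\}$ from \eqref{2.0}, \eqref{3.0}, and on \cite[Lemma~3.4]{BaskKrUsk}, which already establishes the boundedness of $\Gamma$.
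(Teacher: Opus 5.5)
Your proposal is correct and takes essentially the paper's route: you check properties 1)--5) of Definition~\ref{defadm} in turn, and your coefficientwise derivation of \eqref{adA} from $\mu_l/(\mu_l-\mu_j)=1+\mu_j/(\mu_l-\mu_j)$ is the same partial-fraction idea the paper applies to the resolvent $(\mathcal{A}-\lambda I)^{-1}$. The paper's version gets $(\Gamma\mathcal{X})D(\mathcal{A})\subset D(\mathcal{A})$ directly from the range of the resolvent, whereas yours uses $D(\mathcal{A})=\{y:\sum_l\mu_l^2|(y,e_l)|^2<\infty\}$; your explicit uniqueness argument, the bound $\|(\mathcal{A}+tI)^{-1}\|\leqslant 1/t$ for property 5), and the single constant $\gamma=2/\gamma_*$ with $\gamma_*=\inf_n\gamma_n$ supply details the paper only asserts, and that last choice is the right reading of \eqref{estJG}, since testing $\Gamma$ on a rank-one matrix unit shows $\|\Gamma\|\geqslant 1/\gamma_*$, so it cannot be bounded by $2\gamma_n^{-1}$ for all $n$.
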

\begin{proof}
Property~1) of Definition~\ref{defadm} is immediate from the definitions of
$\mathfrak{U}=B(L^2(0, 1))$. Property 2) follows from the definitions of $J$ and $\Gamma$
and \cite[Lemma~3.4]{BaskKrUsk}.

Now we prove Property~3). The proof is identical for both the Dirichlet and Neumann type boundary conditions.
Let $x\in D(\mathcal{A})$ and $\lambda\in\rho(\mathcal{A})$. There exists
$y\in L^2(0, 1)$ such that
\[
x=(\mathcal{A}-\lambda I)^{-1}y=\sum_{j\in\mathbb{J}}\frac{1}{\mu_j-\lambda}\mathcal{P}_jy,
\]
where $\mathcal{P}_j$, $j\in\mathbb{J}$, are defined by \eqref{2.0} and \eqref{3.0}. Then
for $\mathcal{X}\in\mathfrak{U}$ we get
\begin{align*}
(\Gamma\mathcal{X})x &= (\Gamma\mathcal{X})(\mathcal{A}-\lambda I)^{-1}y=
\sum\limits_{\substack{l, j\in\mathbb{J} \\ l\ne j}}
\frac{\mathcal{P}_l\mathcal{X}\mathcal{P}_jy}{(\mu_l-\mu_j)(\mu_j-\lambda)} \\
&= \sum\limits_{\substack{l, j\in\mathbb{J} \\ l\ne j}}
\frac{\mathcal{P}_l\mathcal{X}\mathcal{P}_jy}{(\mu_l-\mu_j)(\mu_l-\lambda)}+
\sum\limits_{\substack{l, j\in\mathbb{J} \\ l\ne j}}
\frac{\mathcal{P}_l\mathcal{X}\mathcal{P}_jy}{(\mu_l-\lambda)(\mu_j-\lambda)} \\
&= (\mathcal{A}-\lambda I)^{-1}(\Gamma\mathcal{X})y+(\mathcal{A}-\lambda I)^{-1}
(\Gamma\mathcal{X})(\mathcal{A}-\lambda I)^{-1}y \\
&= (\mathcal{A}-\lambda I)^{-1}(\Gamma\mathcal{X})(x+y)\in D(\mathcal{A}).
\end{align*}
Thus, $(\Gamma\mathcal{X})D(\mathcal{A})\subset D(\mathcal{A})$. Moreover,
these identities show that the matrices of the bounded operators
$(\Gamma\mathcal{X})(\mathcal{A}-\lambda I)^{-1}
-(\mathcal{A}-\lambda I)^{-1}(\Gamma\mathcal{X})$ and $(\mathcal{A}-\lambda
I)^{-1}(\mathcal{X}-J\mathcal{X})(\mathcal{A}-\lambda I)^{-1}$ coincide.
This gives the formula \eqref{adA}. It follows from \eqref{j} and \eqref{gamma}
that the identity $J(\Gamma\mathcal{X})=0$ holds. Therefore, Property 3) is proved.

Now we verify Property~4). Assume that $\mathcal{X}$, $\mathcal{Y}\in B(L^2(0, 1))$.
Using the estimates \eqref{estJG}, we obtain
\[
\|\mathcal{X}\Gamma\mathcal{Y}\|_{B(L^2(0, 1))}\leqslant \|\mathcal{X}\|_{B(L^2(0, 1))}
\|\Gamma\|_{B(L^2(0, 1))}\|\mathcal{Y}\|_{B(L^2(0, 1))}\leqslant
2\gamma_n^{-1}\|\mathcal{X}\|_{B(L^2(0, 1))}\|\mathcal{Y}\|_{B(L^2(0, 1))}.
\]
Obviously, this inequality holds for $\|(\Gamma\mathcal{X})\mathcal{Y}\|_{B(L^2(0, 1))}$.
Property 4) is proved with constant $2\gamma_n^{-1}$.

Finally, Property~5) follows directly from the boundedness of the operator
$\mathcal{X}(\mathcal{A}-\lambda I)^{-1}$ (see also~\cite[Definition~2.1]{BaskKrUsk}).
Lemma is proved.
\end{proof}

For the perturbation $B$ we use the operator $\mathcal{B}(\alpha)$ of the form \eqref{B}.
Recall that the function $V$ satisfies the condition \eqref{Vc1lalpha}.
Obviously, $\mathcal{B}(\alpha)$ belongs to $\mathfrak{L}_{\mathcal{A}}(L^2(0, 1))$ and,
moreover, is bounded. Indeed, using the inequality \eqref{Vc1lalpha}, we get
\begin{align*}
\|\mathcal{B}(\alpha)y\|^2_{L^2(0, 1)} = \int_0^1|(\mathcal{B}(\alpha)y)(x)|^2\,dx &=
\int_0^1|V(x)y(\alpha x)|^2\,dx \\
&\leqslant\frac{\|V\|_{L^\infty(0, 1)}^2
\|y\|^2_{L^2(0, 1)}}{\alpha}\leqslant c_0^2\|y\|^2_{L^2(0, 1)}.
\end{align*}
These estimates immediately give
\begin{equation}\label{estB}
\|\mathcal{B}(\alpha)\|_{B(L^2(0, 1))}\leqslant c_0.
\end{equation}
Recall that the constant $c_0$ is independent of $\alpha$. Therefore, the operator
$\mathcal{B}(\alpha)$ belongs to the space of admissible
perturbation $\mathfrak{U}=B(L^2(0, 1))$ and we can formulate the main theorem of
similarity for the operator $\mathcal{H}(\alpha)=\mathcal{A}-\mathcal{B}(\alpha)$.
\begin{theorem}\label{thmainsimil}
If the constant $c_0<1/8$, then the operator $\mathcal{H}(\alpha)$ is similar
to the operator $\mathcal{A}-J\mathcal{X}_*$, where $\mathcal{X}_*$ is a solution
of the nonlinear equation
\begin{equation}\label{mainEq}
\mathcal{X}=\mathcal{B}(\alpha)\Gamma\mathcal{X}-(\Gamma\mathcal{X})J\mathcal{X}
+\mathcal{B}(\alpha).
\end{equation}
Moreover, the invertible operator $I+\Gamma\mathcal{X}_*$ is the similarity transform
of $\mathcal{H}(\alpha)$ to $\mathcal{A}-J\mathcal{X}_*$ and the following estimate holds
\begin{equation}\label{x-b}
\|\mathcal{X}_*\|_{B(L^2(0, 1))}\leqslant 4c_0,
\end{equation}
where $c_0>0$ is independent of $\alpha$.
\end{theorem}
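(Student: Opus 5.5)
The plan is to apply the abstract similarity result, Theorem~\ref{thsimgen}, directly with the triplet $(\mathfrak{U}, J, \Gamma)$ built above. Lemma~\ref{lh1} shows this triplet is admissible, and \eqref{estB} shows that $B=\mathcal{B}(\alpha)$ lies in $\mathfrak{U}=B(L^2(0,1))$ with $\|\mathcal{B}(\alpha)\|\leqslant c_0$ uniformly in $\alpha$. What remains is to verify the smallness condition \eqref{mainest} and then read off the conclusions, including the bound \eqref{x-b}.

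\textbf{Step 1: the constant $\gamma$.} From the proof of Lemma~\ref{lh1}, one may take $\gamma=2\gamma_n^{-1}$, where $\gamma_n$ is the minimal spectral gap of $\mathcal{A}$. I need a lower bound for this gap that does not depend on $\alpha$; since $\mathcal{A}$ does not involve $\alpha$, this is automatic. For $\mu_n=(\pi n)^{2k}$ the smallest gap is between the two lowest eigenvalues:
\begin{itemize}
\item Dirichlet case: $\pi^{2k}(2^{2k}-1)\geqslant 3\pi^2$;
\item Neumann case: $\pi^{2k}$, coming from $\mu_1-\mu_0$.
\end{itemize}
In both cases the gap is at least $\pi^2$. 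Using the sharper form $\|\Gamma\|\leqslant \pi/(2\gamma_n)$ from \eqref{estJG}, I get $\gamma\leqslant 2/\pi^2<1$, and also $\|J\|=1$.

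\textbf{Step 2: the smallness condition.} We have
\[
\gamma\|J\|\|\mathcal{B}(\alpha)\|\leqslant \frac{2}{\pi^2}\,c_0<\frac{2}{\pi^2}\cdot\frac18<\frac14,
\]
so \eqref{mainest} holds. In fact even the crude bound $\gamma\leqslant 2$ would suffice, since $2\cdot\frac18=\frac14$ with strict inequality from $c_0<1/8$. This is exactly where the threshold $c_0<1/8$ comes from.

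\textbf{Step 3: applying Theorem~\ref{thsimgen}.} The theorem now yields:
\begin{itemize}
\item a unique fixed point $\mathcal{X}_*$ of \eqref{mainEq};
\item the similarity of $\mathcal{H}(\alpha)=\mathcal{A}-\mathcal{B}(\alpha)$ to $\mathcal{A}-J\mathcal{X}_*$;
\item the invertibility of $I+\Gamma\mathcal{X}_*$, which is the similarity transform.
\end{itemize}
The fixed point lies in the ball $\|\mathcal{X}_*-\mathcal{B}(\alpha)\|\leqslant 3\|\mathcal{B}(\alpha)\|$, so the triangle inequality gives $\|\mathcal{X}_*\|\leqslant 4\|\mathcal{B}(\alpha)\|\leqslant 4c_0$, which is \eqref{x-b}. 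Every constant involved ($\gamma$, $\|J\|$, $c_0$) is independent of $\alpha$, so the estimate is uniform.

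The only delicate point is making sure that the constant $\gamma$ in Property~4 is a genuine constant and not tied to an index $n$. The notation $\gamma_n$ in \eqref{estJG} is somewhat misleading here. I would state explicitly that $\gamma_n$ denotes the global minimal gap $\min_{l\neq j}|\mu_l-\mu_j|\geqslant\pi^2$, so that Theorem~\ref{thsimgen} applies without modification.
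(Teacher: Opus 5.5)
Your proposal is correct and follows the paper's proof. The paper also checks \eqref{mainest} from $\|J\|=1$, $\|\mathcal{B}(\alpha)\|\leqslant c_0$ and a uniform lower bound on the spectral gap; it simply uses $\gamma_n\geqslant 1$ to get $2c_0<1/4$. It then invokes Theorem~\ref{thsimgen} and applies the triangle inequality in the ball $\|\mathcal{X}_*-\mathcal{B}(\alpha)\|\leqslant 3\|\mathcal{B}(\alpha)\|$ to obtain \eqref{x-b}.

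Your explicit gap computation and your remark that $\gamma_n$ must be read as the global minimal gap are valid refinements. One small slip: the sharper bound $\pi/(2\gamma_n)$ gives $\gamma\leqslant 1/(2\pi)$, not $2/\pi^2$. This is immaterial, since both are below $1$.
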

\begin{proof}
It follows from Lemma~\ref{lh1} that the triplet $(B(L^2(0, 1)), J, \Gamma)$ is admissible.
We verify the estimate \eqref{mainest}. The formulas \eqref{2.0},
\eqref{3.0}, and \eqref{estJG} give $\gamma_n\geqslant 1$ for $n\in\mathbb{J}$.
Using the relations \eqref{estJG} and \eqref{estB}, we obtain
\[
\|J\|_{B(L^2(0, 1))}\|\mathcal{B}(\alpha)\|_{B(L^2(0, 1))}\|\Gamma\|_{B(L^2(0, 1))}
\leqslant \frac{2c_0}{\gamma_n}\leqslant 2c_0,
\]
where $c_0$ is independent of $\alpha$. Recall that the constant $c_0$ estimates
the potential $V$ (see the formula~\eqref{Vc1lalpha}). Therefore, choosing $c_0<1/8$,
we can always ensure that the inequality \eqref{mainest} holds.
It follows from Theorem~\ref{thsimgen} that the operator
$\mathcal{H}(\alpha)$ is similar to $\mathcal{A}-J\mathcal{X}_*$, the similarity
transform is the operator $I+\Gamma\mathcal{X}_*$, and the map
$\mathcal{B}(\alpha)\Gamma\mathcal{X}-(\Gamma\mathcal{X})J\mathcal{X}
+\mathcal{B}(\alpha): B(L^2(0, 1))\to B(L^2(0, 1))$ is a contraction and
has unique fixed point $\mathcal{X}_*$ in the ball
\[
\{\mathcal{X}\in B(L^2(0, 1)): \|\mathcal{X}_*-\mathcal{B}(\alpha)\|_{B(L^2(0, 1))}
\leqslant 3\|\mathcal{B}(\alpha)\|_{B(L^2(0, 1))}\}.
\]
Then the relations
\[
\|\mathcal{X}_*\|_{B(L^2(0, 1))}
\leqslant
\|\mathcal{X}_*-\mathcal{B}(\alpha)\|_{B(L^2(0, 1))}+\|\mathcal{B}(\alpha)\|_{B(L^2(0, 1))}
\leqslant 4\|\mathcal{B}(\alpha)\|_{B(L^2(0, 1))}
\]
and the inequality~\eqref{estB} give~\eqref{x-b}. Theorem is proved.
\end{proof}

\section{Proof of the main result}\label{sec6}
In this section we prove Theorem~\ref{th1}.
\begin{proof}[Proof of Theorem~\ref{th1}.]
It follows from Theorem~\ref{thmainsimil} that the operator $\mathcal{H}(\alpha)$
is similar to the operator $\mathcal{A}-J\mathcal{X}_*$. Therefore, using
Lemma~\ref{lemsim}, we obtain that the spectra of the operators $\mathcal{H}(\alpha)$
and $\mathcal{A}-J\mathcal{X}_*$ are coincided. Thus, one can perform further calculations
with the operator $\mathcal{A}-J\mathcal{X}_*$. We transform it as follows
\[
\mathcal{A}-J\mathcal{X}_*=\mathcal{A}-J\mathcal{B}(\alpha)
-J\big(\mathcal{B}(\alpha)\Gamma\mathcal{B}(\alpha)\big)
-J\big(\mathcal{X}_*-\mathcal{B}(\alpha)-\mathcal{B}(\alpha)\Gamma\mathcal{B}(\alpha)\big).
\]
By applying $e_n$ and taking the scalar product of both sides of the last
identity with $e_n$, we obtain
\begin{equation}\label{abstrasympt}
\begin{aligned}
\lambda_n = (\pi n)^{2k}-(J\mathcal{B}(\alpha)e_n, e_n)_{L^2(0, 1)}
&-\big(J(\mathcal{B}(\alpha)\Gamma
\mathcal{B}(\alpha))e_n, e_n\big)_{L^2(0, 1)} \\
&-\big(J(\mathcal{X}_*-\mathcal{B}(\alpha)
-\mathcal{B}(\alpha)\Gamma\mathcal{B}(\alpha))e_n, e_n\big)_{L^2(0, 1)}.
\end{aligned}
\end{equation}

We consider all the terms of the last identity independently. Now we find the estimates
of the error term. Using Equation \eqref{mainEq}, we obtain
\[
\mathcal{X}_*-\mathcal{B}(\alpha)-\mathcal{B}(\alpha)\Gamma\mathcal{B}(\alpha)
=\mathcal{B}(\alpha)\Gamma\big(\mathcal{X}_*-\mathcal{B}(\alpha)\big)
-(\Gamma\mathcal{X}_*)J\mathcal{X}_*.
\]
By applying the operator $J$ to both sides of the last equality and using the property
of the operators $J$ and $\Gamma$, we get
\[
J\big(\mathcal{X}_*-\mathcal{B}(\alpha)-\mathcal{B}(\alpha)\Gamma\mathcal{B}(\alpha)\big)=
J\Big(\mathcal{B}(\alpha)\Gamma\big(\mathcal{X}_*-\mathcal{B}(\alpha)\big)\Big).
\]
This identity, the boundedness of the operators $\mathcal{B}(\alpha)$, $\mathcal{X}_*$,
$J$, $\Gamma$, and the inequalities \eqref{estJG} and~\eqref{estB} give
\begin{equation}\label{prom1}
\begin{aligned}
&\Big|\big(J(\mathcal{X}_*-\mathcal{B}(\alpha)
-\mathcal{B}(\alpha)\Gamma\mathcal{B}(\alpha))e_n, e_n\big)_{L^2(0, 1)}\Big|\\
&\leqslant\|J(\mathcal{X}_*-\mathcal{B}(\alpha)
-\mathcal{B}(\alpha)\Gamma\mathcal{B}(\alpha))\|_{B(L^2(0, 1))}
=\|J\big(\mathcal{B}(\alpha)\Gamma\big(\mathcal{X}_*
-\mathcal{B}(\alpha)\big)\big)\|_{B(L^2(0, 1))} \\
&\leqslant \|J\|_{B(L^2(0, 1))}\|\mathcal{B}(\alpha)\|_{B(L^2(0, 1))}
\|\Gamma\|_{B(L^2(0, 1))}\|\mathcal{X}_*-\mathcal{B}(\alpha)\|_{B(L^2(0, 1))} \\
&\leqslant \frac{2c_0}{\gamma_n}\|\mathcal{X}_*-\mathcal{B}(\alpha)\|_{B(L^2(0, 1))}.
\end{aligned}
\end{equation}
It remains to estimate the norm. Using again Equation~\eqref{mainEq} and
the inequalities \eqref{estJG}, \eqref{estB},~\eqref{x-b}, we obtain
\begin{align*}
&\|\mathcal{X}_*-\mathcal{B}(\alpha)\|_{B(L^2(0, 1))} =
\|\mathcal{B}(\alpha)\Gamma\mathcal{X}_*
-(\Gamma\mathcal{X}_*)J\mathcal{X}_*\|_{B(L^2(0, 1))} \\
&\leqslant \|\mathcal{B}(\alpha)\|_{B(L^2(0, 1))}\|\Gamma\|_{B(L^2(0, 1))}
\|\mathcal{X}_*\|_{B(L^2(0, 1))}+\|\Gamma\|_{B(L^2(0, 1))}
\|\mathcal{X}_*\|_{B(L^2(0, 1))}^2\leqslant \frac{40c_0^2}{\gamma_n}.
\end{align*}
Substituting this estimate into \eqref{prom1}, we obtain
\begin{equation}\label{error}
\Big|\big(J(\mathcal{X}_*-\mathcal{B}(\alpha)
-\mathcal{B}(\alpha)\Gamma\mathcal{B}(\alpha))e_n, e_n\big)_{L^2(0, 1)}\Big|
\leqslant \frac{80c_0^3}{\gamma_n^2}.
\end{equation}
Finally, the third formula in~\eqref{estJG} implies that
\begin{equation}\label{gamma-1}
\frac{1}{\gamma_n}=\frac{1}{\min\limits_{l\ne n\in\mathbb{J}}|\mu_l-\mu_n|}=
\frac{1}{\pi^{2k}(l^{2k}-n^{2k})}\leqslant \frac{1}{\pi^{2k}(2n-1)n^{2k-2}}\leqslant
\frac{1}{\pi^{2k}n^{2k-1}}.
\end{equation}
Therefore, we get the following estimate for the error term:
\begin{equation}\label{error}
\Big|\big(J(\mathcal{X}_*-\mathcal{B}(\alpha)
-\mathcal{B}(\alpha)\Gamma\mathcal{B}(\alpha))e_n, e_n\big)_{L^2(0, 1)}\Big|
\leqslant \frac{80c_0^3}{\pi^{4k}n^{4k-2}}.
\end{equation}

Now we return to the representation~\eqref{abstrasympt} and compute the first
approximation term. Using the relations \eqref{B} and \eqref{2.0} for
the Dirichlet type boundary conditions, we obtain
\begin{equation}\label{firstDir}
(J\mathcal{B}(\alpha)e_n, e_n)_{L^2(0, 1)}=-2\int_0^1V(x)\sin\pi nx\sin\pi n\alpha x\,dx.
\end{equation}
Similarly, the identities \eqref{B} and \eqref{3.0} for the Neumann type boundary
conditions give
\begin{equation}\label{firstNeu}
(J\mathcal{B}(\alpha)e_n, e_n)_{L^2(0, 1)}=-2\int_0^1V(x)\cos\pi nx\cos\pi n\alpha x\,dx.
\end{equation}

It remains to compute the second approximation term in~\eqref{abstrasympt}. Using
the matrix representation \eqref{gamma} of the operator $\Gamma$, we obtain
\begin{equation}\label{second}
\big(J(\mathcal{B}(\alpha)\Gamma\mathcal{B}(\alpha))e_n, e_n\big)_{L^2(0, 1)}
=\sum_{j\in\mathbb{J}\setminus\{n\}}\frac{b_{nj}^Mb_{jn}^M}{\mu_j-\mu_n}
=\frac{1}{\pi^{2k}}\sum_{j\in\mathbb{J}\setminus\{n\}}\frac{b_{nj}^Mb_{jn}^M}{j^{2k}-n^{2k}},
\end{equation}
where $M=\{D, N\}$ and $b_{nj}^M$ has the form \eqref{bnjD} or \eqref{bnjN}.

Now we determine the order of decrease of this term. Using the boundedness of the operators
$\mathcal{B}(\alpha)$, $J$, $\Gamma$, and the estimates \eqref{estB}, \eqref{estJG},
\eqref{gamma-1}, we get
\begin{align*}
\Big|\big(J(\mathcal{B}(\alpha)\Gamma\mathcal{B}(\alpha))e_n, e_n\big)_{L^2(0, 1)}\Big|
&\leqslant \|J(\mathcal{B}(\alpha)\Gamma\mathcal{B}(\alpha))\|_{B(L^2(0, 1))} \\
&\leqslant \|J\|_{B(L^2(0, 1))}\|\mathcal{B}(\alpha)\|_{B(L^2(0, 1))}^2
\|\Gamma\|_{B(L^2(0, 1))}\leqslant \frac{2c_0^2}{\pi^{2k}n^{2k-1}}.
\end{align*}
Thus, we have established all needed relations for $\lambda_n$. Substituting
\eqref{error}, \eqref{firstDir}, \eqref{firstNeu}, and \eqref{second}
into \eqref{abstrasympt}, we obtain the asymptotics \eqref{asymptDir} and \eqref{asymptNeu}.
Theorem is proved.
\end{proof}

\section{Appendix}

As noted in Remark~\ref{rem2}, the requirement that the constant $c_0$ be small in
Theorem~\ref{th1} can be relaxed. Theorem~\ref{th1} can be reformulated without this
condition, but then the asymptotics \eqref{asymptDir} and \eqref{asymptNeu} hold
for sufficiently large indices. Now we provide the necessary proof.

The smallness of $c_0$ played an important role in the proof of Theorem~\ref{thmainsimil}.
We reformulate this theorem for an arbitrary constant $c_0$. Together with
the operators $J$ and $\Gamma$ of the form \eqref{j} and \eqref{gamma} we consider
the sequences of the operators
\[
J_l\mathcal{X}=J(\mathcal{X}-\mathcal{P}_{(l)}\mathcal{X}\mathcal{P}_{(l)})
+\mathcal{P}_{(l)}\mathcal{X}\mathcal{P}_{(l)},
\qquad \Gamma_l\mathcal{X}=\Gamma(\mathcal{X}-\mathcal{P}_{(l)}\mathcal{X}\mathcal{P}_{(l)}),
\]
where $l\in\mathbb{J}$, $\mathcal{P}_{(l)}$ is defined as $\sum_{j=1}^l\mathcal{P}_j$
under Dirichlet type boundary conditions and as $\sum_{j=0}^l\mathcal{P}_j$
under Neumann type boundary conditions. Recall that
the projections $\mathcal{P}_l$, $l\in\mathbb{J}$, have the form \eqref{2.0} or \eqref{3.0}.
If we consider these operators as matrices, then $J_l$ removes part of the main diagonal,
and $\Gamma_l$ removes a large block of size $l\times l$. Thus, we can control the
size of the removed parts. Moreover, $J_l$ and $\Gamma_l$ differ from the
corresponding operators $J$ and $\Gamma$ by a finite rank operator. Since
$\mathcal{B}(\alpha)\in B(L^2(0, 1))$, the operators $J_l\mathcal{B}(\alpha)$
and $\Gamma_l\mathcal{B}(\alpha)$ are well--defined and the estimates \eqref{estJG}
hold for these operators.

According to the scheme of the method of similar operators, we must establish
that the new triplet $(B(L^2(0, 1)), J_l, \Gamma_l)$ is admissible triplet.
Since $J_l$ and $\Gamma_l$ differ from the corresponding operators $J$ and $\Gamma$
by a finite rank operator, the proof of Lemma~\ref{lh1} remains unchanged.

Now we reformulate Theorem~\ref{thmainsimil}.
\begin{theorem}\label{thmainsimil1}
There exists a sufficiently large $n\geqslant l+1$ independent of $\alpha$
such that the operator $\mathcal{H}(\alpha)$ is similar to the operator
$\mathcal{A}-J_n\mathcal{X}_*$, where $\mathcal{X}_*$ is a solution
of the nonlinear equation \eqref{mainEq} with $J=J_n$ and $\Gamma=\Gamma_n$. Moreover, the invertible operator
$I+\Gamma_n\mathcal{X}_*$ is the similarity transform of $\mathcal{H}(\alpha)$ to
$\mathcal{A}-J_n\mathcal{X}_*$ and the estimate \eqref{x-b} holds.
\end{theorem}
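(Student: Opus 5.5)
The plan is to rerun the proof of Theorem~\ref{thmainsimil} with the modified triplet $(B(L^2(0,1)), J_n, \Gamma_n)$. That triplet is admissible for every $n$ by the remark preceding the statement, since $J_n$ and $\Gamma_n$ differ from $J$ and $\Gamma$ by finite-rank operators and the proof of Lemma~\ref{lh1} goes through unchanged. So the only thing to establish is the smallness condition \eqref{mainest} of Theorem~\ref{thsimgen}. There we may no longer rely on $c_0<1/8$; the smallness must instead come from the factor $\|\Gamma_n\|$, and this is the step I expect to be the main obstacle.

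First, bound $\|J_n\|$. Its matrix agrees with that of $J$ outside the block $\mathcal{P}_{(n)}\mathcal{X}\mathcal{P}_{(n)}$ and with $\mathcal{X}$ on that block, so $J_n\mathcal{X}=\mathcal{P}_{(n)}\mathcal{X}\mathcal{P}_{(n)}+J\big((I-\mathcal{P}_{(n)})\mathcal{X}(I-\mathcal{P}_{(n)})\big)$. Both pieces have norm at most $\|\mathcal{X}\|$ and act on orthogonal subspaces, hence $\|J_n\|\leqslant 1$, uniformly in $n$.

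Second, bound $\|\Gamma_n\|$. The matrix of $\Gamma_n\mathcal{X}$ has entries $\mathcal{X}_{lj}/(\mu_l-\mu_j)$ only for pairs $l\ne j$ with $\max\{l,j\}\geqslant n+1$. I would split it into three pieces: the off-diagonal block where both indices exceed $n$, and the two rectangular blocks $\mathcal{P}_{(n)}(\cdot)(I-\mathcal{P}_{(n)})$ and $(I-\mathcal{P}_{(n)})(\cdot)\mathcal{P}_{(n)}$. For every entry that survives, $|\mu_l-\mu_j|\geqslant \mu_{n+1}-\mu_n\geqslant \pi^{2k}(2n+1)n^{2k-2}$, since the larger index is at least $n+1$. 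Applying \cite[Theorem~1.6]{Bask83} on each piece, each being a restriction to an index set whose gap is at least this quantity, gives
\[
\|\Gamma_n\|\leqslant \frac{C}{\pi^{2k}n^{2k-1}}
\]
with an absolute constant $C$; a crude factor such as $3\cdot\pi/2$ is enough. This bound is independent of $\alpha$.

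Third, combine the two bounds with \eqref{estB}:
\[
\gamma\|J_n\|\|\mathcal{B}(\alpha)\|\leqslant \frac{Cc_0}{\pi^{2k}n^{2k-1}}.
\]
Because $c_0$ does not depend on $\alpha$, I can choose $n\geqslant l+1$ large enough, depending only on $c_0$ and $k$, that the right-hand side is below $1/4$. This is \eqref{mainest}. Theorem~\ref{thsimgen} then yields the similarity of $\mathcal{H}(\alpha)$ to $\mathcal{A}-J_n\mathcal{X}_*$ via $I+\Gamma_n\mathcal{X}_*$, and it places $\mathcal{X}_*$ in the ball $\|\mathcal{X}_*-\mathcal{B}(\alpha)\|\leqslant 3\|\mathcal{B}(\alpha)\|$. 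As in the proof of Theorem~\ref{thmainsimil}, that gives $\|\mathcal{X}_*\|\leqslant 4\|\mathcal{B}(\alpha)\|\leqslant 4c_0$, which is \eqref{x-b}, and all constants are uniform in $\alpha$.
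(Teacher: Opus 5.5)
Your proposal is correct and follows the same route as the paper. Both verify \eqref{mainest} for the triplet $(B(L^2(0,1)),J_n,\Gamma_n)$ from $\|J_n\|\leqslant 1$, $\|\mathcal{B}(\alpha)\|\leqslant c_0$ and an $\alpha$-independent bound on $\|\Gamma_n\|$ that tends to zero as $n\to\infty$, then apply Theorem~\ref{thsimgen} and read off \eqref{x-b} from the ball $\|\mathcal{X}_*-\mathcal{B}(\alpha)\|\leqslant 3\|\mathcal{B}(\alpha)\|$; the paper merely asserts that \eqref{estJG} carries over to $\Gamma_n$, whereas you actually justify $\|\Gamma_n\|\leqslant C(\mu_{n+1}-\mu_n)^{-1}$ by splitting $\mathcal{X}-\mathcal{P}_{(n)}\mathcal{X}\mathcal{P}_{(n)}$ into the block $(I-\mathcal{P}_{(n)})\mathcal{X}(I-\mathcal{P}_{(n)})$ and the two rectangular blocks, using the gap that is genuinely relevant once $\mathcal{P}_{(n)}\mathcal{X}\mathcal{P}_{(n)}$ is removed.
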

\begin{proof}
We need to verify the estimate \eqref{mainest}. Using the formulas
\eqref{estJG} and \eqref{estB}, we obtain
\begin{equation}\label{prom2}
\|J_n\|_{B(L^2(0, 1))}\|\mathcal{B}(\alpha)\|_{B(L^2(0, 1))}\|\Gamma_n\|_{B(L^2(0, 1))}
\leqslant \frac{2c_0}{\gamma_n},
\end{equation}
where $c_0$ is independent of $\alpha$. Recall that $\gamma_n$ is defined by the eigenvalues
of the operator $\mathcal{A}$. Since $\gamma_n^{-1}\to 0$,
as $n\to\infty$, we can always choose a large enough $n$ for which the inequality \eqref{prom2}
is less than $1/4$. It follows from Theorem~\ref{thsimgen} that the operator
$\mathcal{H}(\alpha)$ is similar to $\mathcal{A}-J_n\mathcal{X}_*$ and the similarity
transform is the operator $I+\Gamma_n\mathcal{X}_*$. The proof of the rest of
this theorem completely repeats the proof of Theorem~\ref{thmainsimil}.
\end{proof}

Finally, it remains to make the necessary additions to the proof of Theorem~\ref{th1}.
It follows from Theorem~\ref{thmainsimil1} that the operator $\mathcal{H}(\alpha)$
is similar to the operator $\mathcal{A}-J_n\mathcal{X}_*$. Therefore, using
Lemma~\ref{lemsim}, we obtain that the spectra of the operators $\mathcal{H}(\alpha)$
and $\mathcal{A}-J_n\mathcal{X}_*$ coincide. Moreover, there exists $n\in\mathbb{J}$
such that $\mathcal{A}-J_n\mathcal{X}_*$ commutes with all projections
$\mathcal{P}_{(l)}$, $\mathcal{P}_n$, $n\geqslant l+1$.
Therefore, the subspaces $\mathfrak{H}_{(l)}=\mathrm{Im}\,\mathcal{P}_{(l)}$,
$\mathfrak{H}_n=\mathrm{Im}\,\mathcal{P}_n$, $n\geqslant l+1$, are invariant for the operator
$\mathcal{A}-J_n\mathcal{X}_*$. It follows from Theorem~\ref{th0} that the operator
$\mathcal{H}(\alpha)$ (as well as $\mathcal{A}-J_n\mathcal{X}_*$) has a compact resolvent.
Thus, if $\lambda_0\in\sigma(\mathcal{A}-J_n\mathcal{X}_*)$, then there exists
an eigenvector $x_0\in \mathfrak{D}(\mathcal{A})$ such that
$(\mathcal{A}-J_n\mathcal{X}_*)x_0=\lambda_0x_0$. Therefore, it follows from
\eqref{j} that
\begin{equation}\label{polLp}
\mathfrak{A}_{(l)}\mathcal{P}_{(l)}x_0=\lambda_0\mathcal{P}_{(l)}x_0, \qquad
\qquad \mathfrak{A}_n\mathcal{P}_nx_0=\lambda_0\mathcal{P}_nx_0, \qquad \qquad
n\geqslant l+1,
\end{equation}
where $\mathfrak{A}_{(l)}=(\mathcal{A}-J_n\mathcal{X}_*)| \mathfrak{H}_{(l)}$ and
$\mathfrak{A}_n=(\mathcal{A}-J_n\mathcal{X}_*)|\mathfrak{H}_n$ are the restrictions
of the operator $\mathcal{A}-J_n\mathcal{X}_*$ to the subspaces
$\mathfrak{H}_{(l)}$ and $\mathfrak{H}_n$, $n\geqslant l+1$, respectively.
Since the system of projections $\mathcal{P}_{(l)}$, $\mathcal{P}_n$, $n\geqslant l+1$,
is a resolution of the identity, the formula \eqref{polLp} implies that at least one
of the vectors $\mathcal{P}_nx_0$, $n\geqslant l+1$, $\mathcal{P}_{(l)}x_0$,
is nonzero. Thus, $\lambda_0$ is an eigenvalue of the corresponding operator
in the family $\mathfrak{A}_{(l)}$, $\mathfrak{A}_n$, $n\geqslant l+1$. Therefore,
we have the embedding
\[
\sigma(\mathcal{H}(\alpha))=\sigma(\mathcal{A}-J_n\mathcal{X}_*)\subset
\sigma(\mathfrak{A}_{(l)})\bigcup \bigg(\bigcup\limits_{n\geqslant l+1}
\sigma(\mathfrak{A}_n)\bigg).
\]
The reverse embedding obviously holds. Hence
\begin{equation}\label{polsigmaL}
\sigma(\mathcal{H}(\alpha))=\sigma(\mathcal{A}-J_n\mathcal{X}_*)
= \sigma(\mathfrak{A}_{(l)})\bigcup \Big(\bigcup\limits_{n\geqslant l+1}
\sigma(\mathfrak{A}_n)\Big).
\end{equation}
Since the subspaces $\mathfrak{H}_{(l)}$ are finite--dimensional, it follows from
the representation \eqref{polsigmaL} that the set $\sigma(\mathfrak{A}_{(l)})$
contains at most $l$ elements. At the same time the subspaces $\mathfrak{H}_n$ are
one--dimensional and the set $\sigma(\mathfrak{A}_n)$ is a singleton and
$\sigma(\mathfrak{A}_n)=\{\lambda_n\}$.

The relations \eqref{polsigmaL} show that to obtain the eigenvalue asymptotics
of the operator $\mathcal{H}(\alpha)$ for large $n$, it suffices to describe the set
$\sigma(\mathfrak{A}_n)$. Consider again the operator $\mathcal{A}-J_n\mathcal{X}_*$.
We have
\[
\mathcal{A}-J_n\mathcal{X}_*=\mathcal{A}-J_n\mathcal{B}(\alpha)
-J_n\big(\mathcal{B}(\alpha)\Gamma_n\mathcal{B}(\alpha)\big)
-J_n\big(\mathcal{X}_*-\mathcal{B}(\alpha)-\mathcal{B}(\alpha)\Gamma_n\mathcal{B}(\alpha)\big).
\]
We consider the restriction of this operator to the subspace $\mathfrak{H}_n$. Then
\[
\mathfrak{A}_n=\mathcal{A}_n-(J\mathcal{B}(\alpha))_n-\big(J(\mathcal{B}(\alpha)\Gamma
\mathcal{B}(\alpha))\big)_n-\big(J(\mathcal{X}_*-\mathcal{B}(\alpha)
-\mathcal{B}(\alpha)\Gamma\mathcal{B}(\alpha))\big)_n, \quad n\geqslant l+1.
\]
In order to calculate all terms, we employ the matrix representation of these
operators in the basis of $\mathfrak{H}_n$. Next, we apply $e_n$ to both sides of
the last equality and take the inner product with $e_n$. After this, we completely
repeat the calculations in the proof of Theorem~\ref{th1} (see Section~\ref{sec6}). Therefore, the formulation of
Theorem~\ref{th1} remains completely the same for arbitrary $c_0$, but for
sufficiently large indices $n$.


\begin{thebibliography}{}
\bibitem{Ambartsumyan}
V.A.~Ambartsumyan. On the theory of brightness fluctuations in Milky Way.
Dokl. Akad. Nauk SSSR. 1944. V.~44. P.~244--247.

\bibitem{Ockendon}
J.R. Ockendon, A.B. Tayler. The dynamics of a current collection system for
an electric locomotive. Proc. R. Soc. London Ser. A Math. Phys. Eng. Sci. 1971.
V.~322. P.~447--468.

\bibitem{Hall}
A.J. Hall, G.C. Wake. A functional differential equation arising in the
modelling of cell growth. J. Aust. Math. Soc. Ser. B. 1989. V.~30. P.~424--435.

\bibitem{Gaver}
Jr.D.P. Gaver. An absorption probability problem. J. Math. Anal. Appl. 1964. V.~9(3). P.~384--393.

\bibitem{Ros2011}
L.E. Rossovskii. On the specreal stability of functional--differential equations.
Math. Notes. 2011. V.~90. P.~867--881.

\bibitem{Ros2017}
L.E. Rossovskii. Elliptic functional differential equations with contractions and
extensions of independent variables of the unknown function. J. Math. Sci. 2017.
V.~223. P.~351--493.

\bibitem{RosTovs2019}
L.E. Rossovskii, A.A. Tovsultanov. Elliptic functional differential equation with
affine transformation. J. Math. Anal. Appl. 2019. V.~480. 123403.

\bibitem{RosTovs2022}
L.E. Rossovskii, A.A. Tovsultanov. Functional--differential equations with dilation
and symmetry. Sib. Math. J. 2022. V.~63. P.~758--768.

\bibitem{PodSkub}
V.V. Pod'yapol'skii, A.L. Skubachevskii. On the completeness and basis property of
a system of root functions of strongly elliptic functional differential operators.
Russ. Math. Surv. 1996. V.~51(1). P.~169--170.

\bibitem{Skub}
A.L. Skubachevskii. Boundary--value problems for elliptic functional--differential
equations and their applications. Russ. Math. Surv. 2016. V.~71(5). P.~801--906.

\bibitem{Nedic}
D. Nedic, E. Catrnja. Spectral properties of some differential operators of
Sturm--Liouville type with homogeneous delay. Hacettepe J. Math. Stat. 2022. V.~51(3). P.~658--665.

\bibitem{Pikula2014}
M. Pikula, V. Vladicic, D. Nedic. Inverse Sturm--Liouville problems with homogeneous
delay. Sib. Math. J. 2014. V.~55(2). P.~301--308.

\bibitem{Pikula2013}
M. Pikula, V. Vladicic, O. Markovic. A solution to the inverse problem for the
Sturm--Liouville--type equation with a delay. Filomat. 2013. V.~27(7). P.~1237--1245.

\bibitem{Djuric}
N. Djuric. A new approach to inverse problems for Sturm--Liouville operators with
homogeneous delay. Bol. Soc. Mat. Mex. 2025. V.~31. 100.

\bibitem{BaskKrUsk}
A.G. Baskakov, I.A. Krishtal, N.B. Uskova. Similarity techniques in the spectral
analysis of perturbed operator matrices. J. Math. Anal. Appl. 2019. V.~477. P.~930--960.

\bibitem{BorPolIzv}
D.I. Borisov, D.M. Polyakov. Spectral asymptotics for Schr\"odinger operator
perturbed by translation operator. Izv. Math. 2025. V.~89(3). P.~442--460.

\bibitem{PolyakovM2AS}
D.M. Polyakov. Spectral analysis of an even order differential operator with square
integrable potential. Math. Meth. Appl. Sci. 2023. V.~46(5). P.~5483--5504.

\bibitem{BondButVas}
N.P. Bondarenko, S.A. Buterin, S.V. Vasiliev. An inverse spectral problem for
Sturm--Liouville operators with frozen argument. J. Math. Anal. Appl. 2019.
V.~472. P.~1028--1041.

\bibitem{BondHuYang}
Y.-T. Hu, N.P. Bondarenko, C.-F. Yang. Traces and inverse nodal problem for
Sturm--Liouville operators with frozen argument. Appl. Math. Lett. 2020. V.~102. 106096.

\bibitem{Akhmerova}
E.F. Akhmerova. Asymptotics of the spectrum of nonsmooth perturbations of
differential operators of order $2m$. Math. Notes. 2011. V.~90(6). P.~813--823.

\bibitem{BorPolM2AS}
D.I. Borisov, D.M. Polyakov. Spectral properties of Schr\"odinger operator with
translations and Neumann boundary conditions. Math. Meth. Appl. Sci. 2026.
V. 49(7). P.~7108--7127.

\bibitem{Kato}
T. Kato. Perturbation theory for linear operators. Grundlehren Math. Wiss.,
vol. 132, Springer--Verlag New York, Inc., New York, 1966.

\bibitem{Bask83}
A.G. Baskakov. Methods of abstract harmonic analysis in the theory of perturbations of linear
operators. Siberian Math. J. 1983. V.~24(1). P.~17--32.
\end{thebibliography}
\end{document}